\pgfplotsset{compat=newest}
\def\beq{\begin{equation}}
\def\eeq{\end{equation}}
\def\baq{\begin{eqnarray}}
\def\eaq{\end{eqnarray}}
\def\baqn{\begin{eqnarray*}}
\def\eaqn{\end{eqnarray*}}
\theoremstyle{plain}
\newtheorem{remark}{Remark}
\newtheorem{example}{Example}
\newtheorem{theorem}{Theorem}
\newtheorem{corollary}[theorem]{Corollary}
\newtheorem{proposition}[theorem]{Proposition}
\newcommand{\R}{{\mathbb R}}
\newcommand{\interior}{{\rm int}\kern 0.06em}
\def\<{\langle}
\def\>{\rangle}
\renewcommand*{\backrefalt}[4]{%
\ifcase #1 %
(Not cited)%
\or
(Cited on p.~#2)%
\else
(Cited on pp.~#2)%
\fi
}
\begin{document}
%\title{Asymptotic Analysis of State-dependent Sweeping Processes and  New  Algorithms  for Quasi-Variational Inequalities}
\title{{New  General Fixed-Point Approach to Compute the Resolvent of Composite Operators}}
\author{Samir Adly\thanks{Laboratoire XLIM, Universit\'e de Limoges,
123 Avenue Albert Thomas,
87060 Limoges CEDEX, France\vskip 0mm
Email: \texttt{samir.adly@unilim.fr}}
 \qquad Ba Khiet Le \thanks{Optimization Research Group, Faculty of Mathematics and Statistics, Ton Duc Thang University, Ho Chi Minh City, Vietnam\vskip 0mm
 E-mail: \texttt{lebakhiet@tdtu.edu.vn}}
 }
\date{\today}
\maketitle
\begin{abstract}
In this paper,  we  propose a new general  and stable  fixed-point approach to  compute the resolvents of the composition of a set-valued maximal monotone operator with a linear bounded mapping. Weak, strong and linear convergence of the proposed algorithms are obtained. Advantages of our method over the existing approaches are also thoroughly analyzed. 
  %{An application to the image denoising problem is provided.}
\end{abstract}
 {\bf Keywords.} Resolvents of composite operators;  fixed-point approach; maximal monotone operators
 % L1/TV image denoising
\section{Introduction}
Resolvents of maximal monotone operators play an essential role in many fundamental algorithms in optimizations such as proximal point algorithm, Douglas-Rachford splitting, forward-backward splitting and their generalized forms (see, e.g., \cite{Attouch0, Attouch1,Bauschke,Tseng} and the references therein).  
Our aim is to find a new efficient way to compute the resolvent of the composite operator $C^T\mathcal{M}C$,  where $\mathcal{M}: H_2 \rightrightarrows H_2$ is  maximal monotone and $C: H_1\to H_2$ is a linear bounded mapping with its adjoint $C^T: H_2 \to H_1$ where $H_1, H_2$ are  real Hilbert spaces. % It is known that the resolvent computation of composite operators can be applied  in the image denoising, image reconstruction, traffic equilibrium... and can be used to compute the resolvent of the sum of two maximal monotone operators (see, e.g., \cite{chen,Micchelli,Moudafi}).\\
% \tcr{May be we can motivate by this, I will smooth it later}.
 {
 %Consider \( C : H_1 \to H_2 \), a linear and continuous operator, along with its adjoint \( C^* : H_2^* \to H_1^* \). 
  It can be rigorously proved that the composed operator \( C^T\mathcal{M}C : H_1 \to H_1 \), defined by \( C^T\mathcal{M}C(x) := \{C^Tu^* | u^* \in \mathcal{M}(Cx)\} \), exhibits monotonicity. Such operators are not only frequently encountered in the study of elliptic partial differential equations but also play a significant role in optimization problems, Lur'e dynamical systems, signal processing, and machine learning, particularly in the context of large-scale data analysis (see, e.g., \cite{ahl2,Brezis,br0,BT,chen,Micchelli,Micchelli1,Moudafi,L1} and references therein). Moreover, they encapsulate, as a particular instance, the scenario where an operator is the pointwise sum of two or more individual operators. However, without additional constraints, the operator \( C^T\mathcal{M}C \) may not necessarily satisfy the property of maximal monotonicity. For sufficient conditions that guarantee maximal monotonicity, we refer to the works cited in \cite{Zalinescu}.
Further, let \( f : H_2 \to \mathbb{R}\cup\{+\infty\} \) be a function that is convex, proper, and lower semicontinuous.  The composition \( f \circ C \) retains the properties of convexity and lower semicontinuity. Invoking the chain rule of convex analysis yields the relation:
\[
C^T\:(\partial f)\:C\subseteq \partial (f \circ C).
\]
This inclusion becomes an equality under the constraint qualification \( 0 \in \text{int}(\text{rge}\, (C) - \text{dom} f) \), implying that
\[
\partial (f \circ C) = C^T\:(\partial f)\:C.
\]
%In the absence of this constraint qualification, the aforementioned inclusion may not be comprehensive, indicating the possibility of it being strict.
From a numerical perspective, computing the resolvent of \( \partial (f \circ C) \) is crucial for the implementation of algorithms such as the proximal point algorithm. Accurate computation of this resolvent can significantly enhance the performance and convergence properties of  algorithms in solving optimization problems.}

Mathematically, for given $\lambda >0, y\in H_1$ we want to compute the resolvent $J_{\lambda C^T\mathcal{M}C}(y):=x\in H_1$ satisfying 
\beq\label{comp}
y\in x+ \lambda C^T\mathcal{M}C x.
\eeq
Note that  (\ref{comp}) can be rewritten as follows 
\begin{equation}\label{lmiso}
\left\{
\begin{array}{l}
x=y-\lambda C^T v\\ \\
v\in \mathcal{M}C x,
\end{array}\right.
\end{equation}
and  thus  
$$
v \in \mathcal{M}(Cy-\lambda CC^Tv) \Leftrightarrow v \in (\mathcal{M}^{-1}+\lambda CC^T)^{-1}(Cy).
$$
The equality can be obtained in the latter inclusion if $E:=CC^T$ is invertible \cite{Robinson} since then the operator $\mathcal{M}^{-1}+\lambda CC^T$ is strongly monotone. However the operator $\mathcal{M}^{-1}$  makes the computation complex. 

On the other hand, if $E$ is invertible, similarly to the classical equality $(\mathcal{M}^{-1}+\lambda I)^{-1}=\frac{1}{\lambda}(I-J_{\lambda \mathcal{M}})$ one can show that (see Proposition \ref{formula})
$$
(\mathcal{M}^{-1}+\lambda E)^{-1}=\frac{1}{\lambda}E^{-1}(I-J_{\lambda E\mathcal{M}})
$$
where $J_{\lambda E\mathcal{M}}$ is the resolvent of $\lambda E\mathcal{M}$ defined by 
$$
J_{\lambda E\mathcal{M}}:=(I+\lambda E\mathcal{M})^{-1},
$$
and $E\mathcal{M}$ may not be maximal monotone. 
 It gives a new explanation to the algorithm proposed by  Fukushima \cite{Fukushima}, which does not require $\mathcal{M}^{-1}$. {{Nevertheless} even in a finite-dimensional setting, calculating $E^{-1}$ remains costly when the size of matrix $C$ is too large.}

\vskip 2mm
In \cite{Micchelli}, Micchelli, Chen, and Xu introduced an new efficient approach using fixed-point techniques to address certain limitations in the case where $\mathcal{M} = \partial f$. This methodology was further explored by A. Moudafi in \cite{Moudafi} for a general maximal monotone operator $\mathcal{M}$. They established that
\begin{equation}\label{commou}
J_{C^T \mathcal{M} C} y = y - \mu C^T u,
\end{equation}
where $u$ is the fixed point of the operator $\mathcal{Q}$, defined by 
\begin{equation}\label{commou1}
\mathcal{Q} := (I - J_{\mathcal{\frac{1}{\mu} M}}) \circ F \mbox{ with } F(u) := Cy + (I - \mu CC^T)u,
\end{equation}
 for some $\mu > 0$. The fixed point $u$ can be determined using well-known algorithms such as the Krasnoselskii--Mann algorithm to obtain weak convergence, provided that 
\beq\label{fc}
\| I - \mu CC^T \| \leq 1.
\eeq
 To compute $J_{\lambda C^T \mathcal{M} C}$, equation (\ref{commou}) can be applied by setting $\mathcal{M}:= \lambda \mathcal{M}$ and the convergence condition  still involves only the parameter $\mu$ while the parameter $\lambda$ is not used. {It is crucial to note that the substitution $C' = \sqrt{\lambda} C$ is not feasible because $\mathcal{M}$ is generally nonlinear}. Thus this approach is limited by the necessity of a very small $\mu$ if $\Vert C\Vert $ is large. The method was extended to compute $J_{\mathcal{P} + C^T \mathcal{M} C}$ in \cite{chen}, where $\mathcal{P}$ is also a maximal monotone operator, with the explicit condition $\mu \in (0, 2/\| C \|^{2})$ for strong convergence using averaged operators. It is demonstrated (Lemma \ref{lem1}) that the condition $\| I - \mu CC^T \| \leq 1$ is weaker than the explicit condition $\mu \in (0, 2/\| C \|^{2})$. However the explicit condition can be easily used  in practice. We prove that the conditions $\mu \in [0, 2/\| C \|^{2}]$ and  $\| I - \mu CC^T \| \leq 1$ are equivalent if $C$ is a matrix (Proposition \ref{lem1}). 

Motivated by these considerations, we propose a new direct fixed-point approach to compute $J_{\lambda C^T \mathcal{M} C}$ for any $\lambda > 0$, based on the identity $(\mathcal{M}^{-1} + \alpha I)^{-1} = \frac{1}{\alpha}(I - J_{\alpha \mathcal{M}})$. We show that (Corollary \ref{algo2n}) 
\begin{equation}\label{oural}
J_{\lambda C^T \mathcal{M} C} y = y - \lambda \mu C^T u,
\end{equation}
where $u$ is the fixed point of the operator 
\beq\label{oural1}
\mathcal{Q}(u) :=  (I - J_{\frac{1}{\mu} \mathcal{M}}) \Big(Cy + ( I - \lambda \mu CC^T) u\Big).
\eeq
 {Our associated algorithm is strongly convergent under the condition 
 \beq
 \| I - {\lambda}{\mu} CC^T \| \leq 1.
 \eeq} 
 This condition is flexible since it uses both parameters $\lambda$ and $\mu$ especially beneficial when $\Vert C \Vert$ is large. If $\lambda=1$, our fixed-point equation  (\ref{oural}) - (\ref{oural1}) becomes the  fixed-point algorithm (\ref{commou}) - (\ref{commou1}). Furthermore, our algorithm is showed to be more stable (Example \ref{ex1}).
 In addition, if $CC^T$ is positive definite, $\lambda$ and $\mu$ can be selected to achieve linear convergence.  We also provide an application of our results to compute  equilibria of set-valued Lur'e dynamical systems (Example \ref{ex2}). {The technique can be applied similarly to compute the resolvent $\mathcal{M}_1+C^T\mathcal{M}_2C$ where $\mathcal{M}_1:  H_1 \rightrightarrows H_1, \mathcal{M}_2:  H_2 \rightrightarrows H_2$ are maximal monotone operators. }
% This advancement has potential applications in the traffic equilibrium model.

The paper is organized as follows.  In Section \ref{sec2}  we recall some definitions and useful results in the theory of monotone operators.  In Section \ref{sec3}, we provide a new way to compute the resolvent of the composite operators efficiently. The paper  ends in Section \ref{sec5} with some conclusions.

\section{Notations and preliminaries} \label{sec2}
Let  be  given  a {real} Hilbert space  $H$ with {the inner product $\langle \cdot,\cdot \rangle$ and the associated norm $\Vert \cdot \Vert$.}  
%Let $K$ be a closed convex subset of $H$. 
%One defines the distance and the projection from a point $s$ to $K$ as follows
%$${ d}(s,K):=\inf_{x\in K} \|s-x\|, \;\;{\rm proj}_K(s):={\bar{x}} \in K \;\;{\rm such \;that \;} { d}(s,K)= \|s-{\bar{x}}\|.$$
%The normal cone  to  $K$ at $x\in K$ is defined  by
%\beq
%N_K(x):=\{x^*\in H: \langle x^*, y-x \rangle\le 0,\;\;\forall\;y\in K\}.
%\eeq
%It is easy to see that if ${\rm proj}_K(s):={\bar{x}}$ then $s-\bar{x}\in N_K(\bar{x})$.  A  mapping  $f: H\to H$ is called $L$-Lipschitz continuous ($L>0$) provided
%\beq
%\Vert f(x)-f(y)\Vert \le L \Vert x-y \Vert,\;\;\forall\;x, y\in H.
%\eeq
%If $L\le 1$ then $f$ is called nonexpansive.  It  is called $\mu$-strongly monotone ($\mu>0$) if
%\beq
%\langle  f(x)-f(y), x-y \rangle \ge \mu\Vert x-y \Vert^2,\;\;\forall\;x, y\in H.
%\eeq
%It follows that if $f$ is $\mu$-strongly monotone then 
%\beq
%\Vert f(x)-f(y) \Vert \ge \mu\Vert x-y \Vert.
%\eeq
\noindent The domain, the range, the graph and the inverse of a set-valued mapping $\mathcal{M}: {H}\rightrightarrows {H}$ are defined respectively by
$${\rm dom}(\mathcal{M})=\{x\in {H}:\;\mathcal{M}(x)\neq \emptyset\},\;\;{\rm rge}(\mathcal{M})=\displaystyle\bigcup_{x\in{H}}\mathcal{M}(x)\;\;$$
 and
 $$\;\;{\rm gph}(\mathcal{M})=\{(x,y): x\in{H}, y\in \mathcal{M}(x)\},\;\;\;\mathcal{M}^{-1}(y)=\{ x\in H: y\in \mathcal{M}(x) \}.$$

\noindent {The mapping $\mathcal{M}$ is called monotone if 
$$
\langle x^*-y^*,x-y \rangle \ge 0, \;\;\forall \;x, y\in H, x^*\in \mathcal{M}(x) \;{\rm and}\;y^*\in \mathcal{M}(y).
$$
Furthermore, if there is no monotone operator $\mathcal{N}$ such that the graph of $\mathcal{M}$ is strictly {included} in  the graph of $\mathcal{N}$, then $\mathcal{M}$ is called maximal monotone. The subdifferential of a proper lower semicontinuous convex function is an important example of maximal monotone mappings.
 \noindent {The resolvent  of $\mathcal{M}$ is defined  as follows
$$
J_\mathcal{M}:=(I+\mathcal{M})^{-1}.
$$
}
It is known that (see, e.g, \cite{Rockafellar}) $J_\mathcal{M}$ is firmly-nonexpansive, i.e., 
$$
\langle J_\mathcal{M} x-J_\mathcal{M}y, x-y \rangle \ge \Vert  J_\mathcal{M} x-J_\mathcal{M}y \Vert^2.
$$
{We summarize several classical properties of maximal monotone operators in the proposition below (we refer to \cite{AC} for example).}
{
\begin{proposition}{\rm (\cite{AC})}\label{Yosida}
Let $\mathcal{M}:  {H} \rightrightarrows  {H}$ be a maximal monotone operator and let $\lambda>0$. Then
\begin{enumerate}
  \item[{\rm (i)}]  The {resolvent} $J_{\lambda \mathcal{M}}:=(I+\lambda \mathcal{M})^{-1} $ is a {nonexpansive}  and single-valued map from $ {H}$ to $ {H}$.
\item[{\rm (i)}]  The {Yosida approximation} of $\mathcal{M}$  (of index $\lambda$) defined by $$\mathcal{M}_\lambda:=\frac{1}{\lambda}(I-J_{\lambda \mathcal{M}})=(\lambda I+\mathcal{M}^{-1})^{-1}$$ satisfies\\
\begin{enumerate}
  \item[{\rm (a)}]  for all $x\in \mathcal{H}$, $\mathcal{M}_\lambda(x)\in \mathcal{M}(J_{\lambda \mathcal{M}} (x))$;
\item[{\rm (b)}]  $\mathcal{M}_\lambda$ is $\frac{1}{\lambda}$-Lipschitz continuous   and also maximal monotone. 
\end{enumerate} 
\end{enumerate}
\end{proposition}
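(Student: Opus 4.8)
This proposition packages the classical consequences of Minty's theorem, so my plan is to single out the one genuinely deep ingredient --- surjectivity of $I+\lambda\mathcal{M}$ --- and to obtain everything else by elementary algebra on the monotonicity inequality together with the definition of the resolvent.

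For the single-valuedness and nonexpansiveness in (i), I would start from two points $x_i=J_{\lambda\mathcal{M}}(y_i)$, which mean $\tfrac1\lambda(y_i-x_i)\in\mathcal{M}(x_i)$ for $i=1,2$, and insert them into the monotonicity of $\mathcal{M}$. After clearing the factor $\lambda>0$ this yields the firm-nonexpansiveness estimate $\langle y_1-y_2,\,x_1-x_2\rangle\ge\norm{x_1-x_2}^2$; setting $y_1=y_2$ then forces $x_1=x_2$ (single-valuedness), and a Cauchy--Schwarz step converts the same estimate into $\norm{x_1-x_2}\le\norm{y_1-y_2}$ (nonexpansiveness).

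The hard part --- and the only place where maximality rather than mere monotonicity is used --- is to show that $J_{\lambda\mathcal{M}}$ is defined on all of $H$, i.e. $\mathrm{rge}(I+\lambda\mathcal{M})=H$. This is exactly Minty's theorem, it cannot be reached by algebra, and I expect it to be the main obstacle. I would either invoke it directly from \cite{AC}, or recall the standard route: reduce to $\lambda=1$ by rescaling and derive the missing surjectivity from maximality through a fixed-point / maximal-graph extension argument.

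For (ii), part (a) should fall out of unwinding the resolvent: by definition $x-J_{\lambda\mathcal{M}}(x)\in\lambda\mathcal{M}\bigl(J_{\lambda\mathcal{M}}(x)\bigr)$, so dividing by $\lambda$ gives $\mathcal{M}_\lambda(x)\in\mathcal{M}\bigl(J_{\lambda\mathcal{M}}(x)\bigr)$. Writing $y=\mathcal{M}_\lambda(x)$ turns this into $J_{\lambda\mathcal{M}}(x)=x-\lambda y$ with $y\in\mathcal{M}(x-\lambda y)$, i.e. $x\in(\lambda I+\mathcal{M}^{-1})(y)$, and reading the chain backwards yields the identity $\mathcal{M}_\lambda=(\lambda I+\mathcal{M}^{-1})^{-1}$. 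For (b) I would use that $J_{\lambda\mathcal{M}}$ firmly nonexpansive makes $I-J_{\lambda\mathcal{M}}$ firmly nonexpansive as well (the defining inequality is symmetric in $T$ and $I-T$), hence nonexpansive; since $\mathcal{M}_\lambda=\tfrac1\lambda(I-J_{\lambda\mathcal{M}})$, it is $\tfrac1\lambda$-Lipschitz, and the same firm-nonexpansiveness gives $\langle\mathcal{M}_\lambda x-\mathcal{M}_\lambda y,\,x-y\rangle\ge 0$, so $\mathcal{M}_\lambda$ is monotone. Being single-valued, continuous and everywhere defined, it is then maximal monotone by the standard criterion --- a fact one can equally read off the identity in (a), since $\mathcal{M}^{-1}$ is maximal monotone, $\lambda I+\mathcal{M}^{-1}$ is maximal monotone as a full-domain continuous perturbation, and the inverse of a maximal monotone operator is again maximal monotone.
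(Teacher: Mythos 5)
Your outline is correct: the firm-nonexpansiveness computation, the reduction of everything except surjectivity to elementary algebra on the monotonicity inequality, the isolation of Minty's theorem as the one place where maximality is needed, and the derivation of the identity $\mathcal{M}_\lambda=(\lambda I+\mathcal{M}^{-1})^{-1}$ and of (b) from firm nonexpansiveness of $I-J_{\lambda\mathcal{M}}$ are all the standard, valid arguments. The paper itself offers no proof of this proposition --- it is quoted verbatim from \cite{AC} --- so there is nothing to compare against beyond noting that your sketch is precisely the classical proof one would find in that reference.
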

}
\noindent  A linear bounded mapping $D: H\to H$  is called
 \begin{itemize}
\item  {{\textit{positive semidefinite}, denoted}  $D\succeq 0$}, if 
$$\langle Dx,x \rangle \ge 0,\;\;\forall\;x\in H;$$
\item {{\textit{positive definite},  denoted}  $D\succ 0$}, if there exists $c>0$ such that 
$$
\langle Dx,x \rangle \ge c\|x\|^2,\;\;\forall\;x\in H.
$$
\end{itemize}
%\begin{definition}  The resolvent  of $\mathcal{M}$ (of index $\lambda$) with respect to $E$ is defined by 
%$$
%J^E_{\lambda\mathcal{M}}:=(I+\lambda E\mathcal{M})^{-1}.
%$$
%\end{definition}
%{We note that if $y=J^E_{\lambda\mathcal{M}}x=(I+\lambda E\mathcal{M})^{-1}x$, $x\in H$ then 
%\begin{equation}
%\label{def1Res}
%x\in y+\lambda E\mathcal{M} (y).
%\end{equation}}
\begin{proposition}
Let {$E\succ 0$}, $\lambda >0$ and $\mathcal{M}:  {H} \rightrightarrows  {H}$ be a maximal monotone operator. Then  $J_{\lambda E\mathcal{M}}:=(I+\lambda E\mathcal{M})^{-1}$ is single-valued and Lipschitz continuous.
\end{proposition}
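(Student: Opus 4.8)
The plan is to reduce the inclusion defining $J_{\lambda E\mathcal{M}}$ to the inverse of a strongly monotone maximal monotone operator, and then invoke the classical fact that such an inverse is single-valued, globally defined, and Lipschitz. First I would record that $E\succ 0$ forces $E$ to be invertible with bounded inverse. Indeed, the coercivity $\langle Ex,x\rangle\ge c\|x\|^2$ combined with the Cauchy--Schwarz inequality yields $\|Ex\|\ge c\|x\|$, so $E$ is bounded below; applying the Lax--Milgram theorem to the bounded coercive bilinear form $(x,y)\mapsto\langle Ex,y\rangle$ then gives that $E$ is bijective with $\|E^{-1}\|\le 1/c$. A short computation with the substitution $u=Ex$ shows moreover that $E^{-1}$ is itself strongly monotone, since $\langle E^{-1}u,u\rangle=\langle x,Ex\rangle\ge c\|x\|^2\ge (c/\|E\|^2)\|u\|^2$.

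Next I would rewrite the defining inclusion $y\in x+\lambda E\mathcal{M}x$ by applying $E^{-1}$, obtaining the equivalent inclusion $\frac{1}{\lambda}E^{-1}y\in(\frac{1}{\lambda}E^{-1}+\mathcal{M})x$. Setting $B:=\frac{1}{\lambda}E^{-1}+\mathcal{M}$, I observe that $\frac{1}{\lambda}E^{-1}$ is a bounded linear strongly monotone operator defined on all of $H$, hence maximal monotone, and its sum with the maximal monotone $\mathcal{M}$ is again maximal monotone (the domain qualification for the sum theorem is automatic because $\interior\dom(\frac{1}{\lambda}E^{-1})=H$). Furthermore $B$ inherits strong monotonicity, with modulus $\alpha:=c/(\lambda\|E\|^2)$. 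A strongly monotone maximal monotone operator is surjective and has a single-valued, $(1/\alpha)$-Lipschitz inverse (this follows from Minty's theorem: $J_{\lambda B}$ is a contraction, so the relevant fixed-point map has a unique fixed point, giving surjectivity). Since $y\mapsto\frac{1}{\lambda}E^{-1}y$ is a bounded linear map, the representation $J_{\lambda E\mathcal{M}}y=B^{-1}(\frac{1}{\lambda}E^{-1}y)$ shows that $J_{\lambda E\mathcal{M}}$ is single-valued and Lipschitz, as claimed.

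As a shortcut for the single-valuedness and the Lipschitz bound alone, one can argue directly without the sum theorem: given two solutions $y_i\in x_i+\lambda E w_i$ with $w_i\in\mathcal{M}x_i$, pairing $y_1-y_2$ with $w_1-w_2$ and using monotonicity of $\mathcal{M}$ together with coercivity of $E$ yields $\|w_1-w_2\|\le\frac{1}{\lambda c}\|y_1-y_2\|$, whence $\|x_1-x_2\|\le(1+\|E\|/c)\|y_1-y_2\|$, and taking $y_1=y_2$ forces $x_1=x_2$. I expect the genuinely substantive part to be the existence (surjectivity) half: it is there that maximal monotonicity of the sum $B$ and the passage from strong monotonicity to surjectivity via Minty's theorem are essential, whereas the Lipschitz bound is an elementary monotonicity estimate.
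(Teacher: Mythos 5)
Your proposal is correct, and its ``shortcut'' paragraph is essentially the paper's entire proof: the paper takes $y_i\in J_{\lambda E\mathcal{M}}x_i$, rewrites this as $\frac{1}{\lambda}E^{-1}(x_i-y_i)\in\mathcal{M}(y_i)$, and applies monotonicity of $\mathcal{M}$ together with positive definiteness of $E^{-1}$ to get $c\|y_1-y_2\|^2\le\|E^{-1}\|\,\|x_1-x_2\|\,\|y_1-y_2\|$; your shortcut obtains the same uniqueness-plus-Lipschitz estimate by pairing with $w_1-w_2\in\mathcal{M}x_1-\mathcal{M}x_2$ and using coercivity of $E$ directly, which is the same elementary argument presented on the other side of the inversion. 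Where you genuinely go beyond the paper is in the existence half: the paper never shows that $I+\lambda E\mathcal{M}$ is surjective (i.e.\ that $J_{\lambda E\mathcal{M}}$ is defined on all of $H$), nor does it justify that $E^{-1}$ exists as a bounded operator --- both are taken for granted. Your route through $B:=\frac{1}{\lambda}E^{-1}+\mathcal{M}$, the sum theorem for a maximal monotone operator plus an everywhere-defined continuous monotone one, and the surjectivity of strongly monotone maximal monotone operators supplies exactly these missing pieces, and the Lax--Milgram step cleanly covers the (possibly nonsymmetric) invertibility of $E$. This matters because the full domain of $J_{\lambda E\mathcal{M}}$ is implicitly used in the very next result of the paper (Proposition~\ref{formula}), where $J_{\lambda E\mathcal{M}}x$ is evaluated at arbitrary $x\in H$. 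In short: your argument proves strictly more than the paper's, at the cost of invoking the sum theorem and Minty's theorem, while your elementary estimate reproduces what the paper actually establishes.
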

\begin{proof}
Let $y_i\in J_{\lambda E\mathcal{M}}x_i, i=1, 2.$ Then we have 
$$
x_i\in y_i+\lambda E\mathcal{M} (y_i) \Leftrightarrow \frac{1}{\lambda}E^{-1}(x_i-y_i)\in \mathcal{M}(y_i).
$$
The {operator} $E^{-1}$ is positive definite since $E$ is positive definite.  Using the monotonicity of $\mathcal{M}$, we have 
$$
\langle E^{-1}(x_1-x_2-y_1+y_2), y_1-y_2 \rangle \ge 0.
$$
Thus 
$$
c\Vert y_1-y_2\Vert^2\le \langle E^{-1}(y_1-y_2), y_1-y_2\rangle\le \langle E^{-1}(x_1-x_2), y_1-y_2 \rangle \le \Vert  E^{-1}\Vert \Vert x_1-x_2\Vert \Vert y_1-y_2\Vert,
$$
for some $c>0$ and the conclusion follows. 
\end{proof}
% The following result provides a way to characterize the one-sided parallel sum $(\mathcal{M}^{-1}+\lambda E)^{-1}$, which is similar to the classical equality $(\mathcal{M}^{-1}+\lambda I)^{-1}=\frac{1}{\lambda}(I-J_{\lambda \mathcal{M}})$ in Proposition 1.
{The subsequent result offers a characterization of  $(\mathcal{M}^{-1} + \lambda E)^{-1}$, analogous to the classical identity $(\mathcal{M}^{-1} + \lambda I)^{-1} = \frac{1}{\lambda}(I - J_{\lambda \mathcal{M}})$ presented in Proposition \ref{Yosida}.}
\begin{proposition}\label{formula}
If {$E\succ0, \lambda>0$} and $\mathcal{M}:  {H} \rightrightarrows  {H}$ is a maximal monotone operator,  then 
$$
(\mathcal{M}^{-1}+\lambda E)^{-1}=\frac{1}{\lambda}E^{-1}(I-J_{\lambda E\mathcal{M}}).
$$
\end{proposition}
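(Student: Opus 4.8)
The plan is to verify the claimed operator identity pointwise: I fix an arbitrary $z \in H$ and show that $w$ belongs to the left-hand side evaluated at $z$ if and only if $w$ equals the (single) point produced by the right-hand side. The two ingredients I will lean on are that $E \succ 0$ forces $E$ to be invertible (so $E^{-1}$ exists and is itself positive definite), and that $J_{\lambda E\mathcal{M}} = (I + \lambda E\mathcal{M})^{-1}$ is single-valued, which is exactly the content of the preceding proposition. Everything then reduces to a reversible chain of equivalences together with one change of variables, so rather than proving two inclusions separately I will keep each step an ``if and only if''.

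First I would unfold the inverse of the sum. By definition, $w \in (\mathcal{M}^{-1} + \lambda E)^{-1}(z)$ means $z \in \mathcal{M}^{-1}(w) + \lambda E w$, i.e. $z - \lambda E w \in \mathcal{M}^{-1}(w)$, which by the definition of the inverse operator is equivalent to $w \in \mathcal{M}(z - \lambda E w)$. The second step is the substitution $x \eqdef z - \lambda E w$; since $E$ is invertible this is a genuine bijective change of variable, with inverse relation $w = \frac{1}{\lambda} E^{-1}(z - x)$. Rewriting the inclusion $w \in \mathcal{M}(x)$ in terms of $x$ gives $\frac{1}{\lambda} E^{-1}(z - x) \in \mathcal{M}(x)$, equivalently $z - x \in \lambda E\mathcal{M}(x)$, equivalently $z \in (I + \lambda E\mathcal{M})(x)$. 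By single-valuedness of the resolvent this says precisely $x = J_{\lambda E\mathcal{M}}(z)$.

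Combining the two steps, $w \in (\mathcal{M}^{-1} + \lambda E)^{-1}(z)$ holds if and only if $x = J_{\lambda E\mathcal{M}}(z)$ and $w = \frac{1}{\lambda}E^{-1}(z - x)$, that is, $w = \frac{1}{\lambda}E^{-1}\bigl(z - J_{\lambda E\mathcal{M}}(z)\bigr) = \frac{1}{\lambda}E^{-1}(I - J_{\lambda E\mathcal{M}})(z)$, which is the desired identity. I do not expect a serious obstacle here; the only points requiring care are making sure each equivalence is truly reversible (which is why invertibility of $E$ and single-valuedness of $J_{\lambda E\mathcal{M}}$ must be invoked explicitly) and confirming that the right-hand side is everywhere single-valued so that the claimed equality of operators is meaningful. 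The latter is automatic, since $\mathcal{M}^{-1} + \lambda E$ is strongly monotone by $E \succ 0$, and the computation above in fact exhibits its inverse as a single-valued map.
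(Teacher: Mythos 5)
Your proof is correct and follows essentially the same route as the paper: both reduce the identity to the defining inclusion $x \in J_{\lambda E\mathcal{M}}x + \lambda E\mathcal{M}(J_{\lambda E\mathcal{M}}x)$ via a reversible chain of equivalences that unfolds the inverses and uses the invertibility of $E$. The only (cosmetic) difference is direction: the paper substitutes the candidate $\frac{1}{\lambda}E^{-1}(I-J_{\lambda E\mathcal{M}})(x)$ and checks it works, whereas you start from an arbitrary element of the left-hand side and derive that formula, which has the small advantage of yielding single-valuedness of the left-hand side as a byproduct rather than asserting it separately.
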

\begin{proof}
{It is easy to see that both operators in the last equality are single-valued}. Let $x\in H$ and $y:=\frac{1}{\lambda}E^{-1}(I-J_{\lambda E\mathcal{M}})(x)$. 
{We have,
%Indeed 
\baqn
(\mathcal{M}^{-1}+\lambda E)^{-1}(x)=y &\Leftrightarrow& x\in (\mathcal{M}^{-1}+\lambda E)(y).\\
&\Leftrightarrow&x\in (\mathcal{M}^{-1}+\lambda E)(\frac{1}{\lambda}E^{-1}-\frac{1}{\lambda}E^{-1}J_{\lambda E\mathcal{M}})(x)\\
&\Leftrightarrow& x\in x-J_{\lambda E\mathcal{M}}x+\mathcal{M}^{-1}(\frac{1}{\lambda}E^{-1}-\frac{1}{\lambda}E^{-1}J_{\lambda E\mathcal{M}})(x) \\
&\Leftrightarrow& E^{-1}x \in E^{-1}J_{\lambda E\mathcal{M}}x+ \lambda \mathcal{M} (J_{\lambda E\mathcal{M}}x) \\
&\Leftrightarrow& x\in J_{\lambda E\mathcal{M}}x+\lambda E\mathcal{M} (J_{\lambda E\mathcal{M}}x).
\eaqn
The last inclusion is valid as it directly follows from the definition of $J_{\lambda E\mathcal{M}}$.
}
\end{proof}
\section{Main results} \label{sec3}
In this section, we establish a new general fixed-point equation  to compute efficiently the resolvents of the composite operators  $C^T\mathcal{M}C$ and  $\mathcal{M}_1+C^T\mathcal{M}_2C$ where $C: H_1\to H_2$ is a linear bounded mapping, $\mathcal{M}, \mathcal{M}_2: H_2 \rightrightarrows H_2, \mathcal{M}_1:  H_1 \rightrightarrows H_1$ are maximal monotone operators and $H_1, H_2$ are real Hilbert spaces. The key tool is the   Yosida approximation $\mathcal{M}_\alpha:=\frac{1}{\alpha}(I-J_{\alpha \mathcal{M}})=(\mathcal{M}^{-1}+\alpha I)^{-1}$, $\alpha>0$. 
\subsection{Resolvent of $C^T\mathcal{M}C$}
First we  show that the explicit condition $\alpha\in [0,2/\Vert C \Vert^2]$ is stronger than the condition $\Vert I-\alpha CC^T \Vert \le 1$. However the explicit condition can be easily used in practice. These conditions are equivalent if $C$ is a matrix.
%\begin{lemma}\label{lem1}
%If $\alpha \in [0,2/\Vert C \Vert^2]$, then  $\Vert I-\alpha CC^T \Vert \le 1$. In addition, if $C$ is a matrix then two condition are equivalent. 
%\end{lemma}
%\begin{lemma}\label{lem1}
%If $\alpha \in \left[0, \frac{2}{\|C\|^2}\right]$, then $\|I - \alpha CC^T\| \leq 1$. \tcb{In addition, if $C$ is a linear operator on finite-dimensional Hilbert spaces (i.e., $C$ can be represented as a matrix)}, then the two conditions are equivalent.
%\end{lemma}
%
%\begin{proof}
%Suppose that $\alpha \in [0,2/\Vert C \Vert^2]$. For all $x\in H_1$, we have 
%\baqn
%\Vert x-\alpha CC^T x\Vert^2&=&\Vert x\Vert^2-2\alpha \langle x, CC^T x \rangle+\alpha^2 \Vert CC^T x\Vert^2\\
%&\le&\Vert x\Vert^2-2\alpha \Vert C^T x\Vert^2+\alpha^2 \Vert C\Vert^2\Vert C^T x\Vert^2\\
%&\le&\Vert x\Vert^2
%\eaqn
%and thus $\|I - \alpha CC^T\| \leq 1$.  
%
%\tcb{Next  suppose that if $H_1$ and $H_2$ are finite-dimensional Hilbert spaces (in which case C can be represented as a matrix) and $\|I - \alpha CC^T\|  \leq 1$, we will prove that $\alpha \in [0,2/\Vert C \Vert^2]$. Assume that $\alpha>2/\Vert C \Vert^2$. Let $B=CC^T$ then $B$ is a symmetric, positive semi-definite matrix and  $\Vert B \Vert=\Vert C \Vert^2$. There exists $x^*\in H_1$ such that $Bx^*=\Vert B \Vert x^*$. Then 
%$$
%x^*-\alpha Bx^*=(1-\alpha \Vert B \Vert)x^*
%$$
%where $1-\alpha \Vert B \Vert<1-\frac{2}{ \Vert B \Vert}{ \Vert B \Vert}=-1.$ Consequently $\|I - \alpha CC^T\|  > 1$, a contradiction and the conclusion follows. }
%\end{proof}
%%%%%%%%%%%%%%%
% New readction of the Lemma
{
\begin{proposition}\label{lem1}
Let $C: H_1 \to H_2$ be a linear bounded mapping between Hilbert spaces. If $\alpha \in \left[0, \frac{2}{\|C\|^2}\right]$, then $\|I - \alpha CC^T\| \leq 1$. In addition, if $H_1=\R^n$ and $H_2=\R^m$ (i.e., $C$ can be represented as a matrix), then the two conditions are equivalent.
\end{proposition}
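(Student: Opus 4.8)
The plan is to handle the two implications separately, using throughout that $CC^T$ is self-adjoint and positive semidefinite together with the standard identity $\|CC^T\| = \|C\|^2$. The latter follows at once from $\norm{C^Tx}^2 = \dotp{CC^Tx}{x}$ and $\|C^T\| = \|C\|$, since taking the supremum over unit vectors gives $\|C^T\|^2 = \|CC^T\|$.

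For the forward implication, which I would state for an arbitrary Hilbert space, the key is that $A := I - \alpha CC^T$ is self-adjoint, so its operator norm equals its numerical radius, $\|A\| = \sup_{\norm{x}=1}\lvert\dotp{Ax}{x}\rvert$. For a unit vector $x$ one computes
\[
\dotp{Ax}{x} = \norm{x}^2 - \alpha\,\dotp{CC^Tx}{x} = 1 - \alpha\,\norm{C^Tx}^2,
\]
and writing $t := \norm{C^Tx}^2$ we have $t \in [0,\|C\|^2]$ by the identity above. It then remains to check the scalar inequality $\lvert 1 - \alpha t\rvert \le 1$ for every $t \in [0,\|C\|^2]$ when $\alpha \in [0, 2/\|C\|^2]$: the affine map $t \mapsto 1 - \alpha t$ is nonincreasing, equals $1$ at $t = 0$ and equals $1 - \alpha\|C\|^2 \in [-1,1]$ at $t = \|C\|^2$, so its range over the interval lies in $[-1,1]$. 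Taking the supremum over $x$ yields $\|I - \alpha CC^T\| \le 1$.

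For the converse in the matrix case, the point is that the bound $t \le \|C\|^2$ is now \emph{attained}. Viewing $E := CC^T$ as a symmetric positive semidefinite $m\times m$ matrix, the spectral theorem furnishes eigenvalues $\lambda_1 \ge \dots \ge \lambda_m \ge 0$ with $\lambda_1 = \|C\|^2$ (the square of the largest singular value of $C$), so that $\|I - \alpha E\| = \max_i \lvert 1 - \alpha\lambda_i\rvert \ge \lvert 1 - \alpha\|C\|^2\rvert$. Hence $\|I - \alpha CC^T\| \le 1$ forces $\lvert 1 - \alpha\|C\|^2\rvert \le 1$, which rearranges to $0 \le \alpha\|C\|^2 \le 2$, i.e. $\alpha \in [0, 2/\|C\|^2]$. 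Together with the forward implication this gives the asserted equivalence.

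The only delicate step is the identification of $\|C\|^2$ with the top eigenvalue of $CC^T$ together with its attainment by an eigenvector; this attainment is precisely what the finite-dimensional hypothesis $H_1 = \R^n,\ H_2 = \R^m$ supplies, and it is what permits the elementary eigenvalue argument for the converse instead of invoking the spectral calculus of bounded operators. The remaining manipulations are routine scalar algebra.
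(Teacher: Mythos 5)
Your proof is correct. The converse (matrix) direction is essentially the paper's argument: both of you invoke the spectral theorem to produce a unit eigenvector of $CC^T$ attaining the top eigenvalue $\|C\|^2$ and conclude $\|I-\alpha CC^T\|\ge |1-\alpha\|C\|^2|$. The forward direction, however, takes a genuinely different route. The paper expands $\|x-\alpha CC^Tx\|^2=\|x\|^2-2\alpha\|C^Tx\|^2+\alpha^2\|CC^Tx\|^2$ and bounds the cross terms directly, obtaining the quantitative estimate $\|x\|^2-\alpha\|C^Tx\|^2(2-\alpha\|C\|^2)$; a variant of that explicit remainder is reused later in the strong-convergence argument of Theorem \ref{algokm}, which is a side benefit of their computation. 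You instead exploit self-adjointness of $I-\alpha CC^T$ via the identity $\|A\|=\sup_{\|x\|=1}|\langle Ax,x\rangle|$, reducing everything to the scalar inequality $|1-\alpha t|\le 1$ on $t\in[0,\|C\|^2]$; this is cleaner and unifies the two directions (the converse is just the observation that $t=\|C\|^2$ is attained in finite dimensions), at the cost of invoking the norm-equals-numerical-radius fact for self-adjoint operators rather than bare Cauchy--Schwarz. A small bonus of your version: from $|1-\alpha\|C\|^2|\le 1$ you deduce $0\le\alpha\|C\|^2\le 2$, which rules out $\alpha<0$ as well, whereas the paper's proof by contradiction only negates $\alpha>2/\|C\|^2$ and formally leaves the case $\alpha<0$ untreated.
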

\begin{proof}
First, suppose that $\alpha \in [0,2/\|C\|^2]$. For all $x\in H_1$, we have 
\begin{align*}
\|x-\alpha CC^T x\|^2 &= \|x\|^2 - 2\alpha \langle x, CC^T x \rangle + \alpha^2 \|CC^T x\|^2 \\
&\leq \|x\|^2 - 2\alpha \|C^T x\|^2 + \alpha^2 \|C\|^2\|C^T x\|^2 \\
&= \|x\|^2 - \alpha \|C^T x\|^2 (2 - \alpha \|C\|^2) \\
&\leq \|x\|^2
\end{align*}
where the last inequality follows from $\alpha \leq 2/\|C\|^2$. Thus, $\|I - \alpha CC^T\| \leq 1$.\\
Next, suppose that $H_1=\R^n$ and $H_2=\R^m$ and $\|I - \alpha CC^T\| \leq 1$. We will prove that $\alpha \in [0,2/\|C\|^2]$ by contradiction. Assume that $\alpha > 2/\|C\|^2$. Let $B = CC^T$; then $B$ is a symmetric, positive semi-definite matrix and $\|B\| = \|C\|^2$. By the spectral theorem, there exists a unit vector $x^* \in H_1$ such that $Bx^* = \|B\| x^*$. Then 
\[
(I - \alpha B)x^* = (1 - \alpha \|B\|)x^*
\]
where $1 - \alpha \|B\| < 1 - \frac{2}{\|B\|}\|B\| = -1$. Consequently,
\[
\|I - \alpha CC^T\| = \|I - \alpha B\| \geq \|(I - \alpha B)x^*\| = |1 - \alpha \|B\|| > 1.
\]
This contradicts our assumption that $\|I - \alpha CC^T\| \leq 1$. Therefore, we must have $\alpha \in [0,2/\|C\|^2]$.
\end{proof}
}
%%%%%%%%%%%%%%%%%%%%%

%\tcb{
%}
%\begin{theorem}\label{tmf}
%Let be given $\lambda>0$ and $y\in H_1$. Then $J_{\lambda C^T\mathcal{M}C}y=y-\lambda C^T u$, where $u$ is the fixed point of the  operator $\mathcal{N}(u):=\mathcal{M}_\mu (Cy+(\mu I-\lambda CC^T)u)$, i.e., 
%\beq\label{fixp}
%u=\mathcal{M}_\mu (Cy+(\mu I-\lambda CC^T)u)
%\eeq
%for any $\mu>0$ and $\mathcal{M}_\mu$ denotes the Yosida approximation of $\mathcal{M}$ (of index $\mu$). In addition, if $\frac{\lambda}{\mu}\le 2/\Vert C\Vert^2$ then $\Vert I-\frac{\lambda}{\mu} CC^T \Vert \le 1$ and thus $\mathcal{N}$ is nonexpansive.
%\end{theorem}
{{
\begin{theorem}\label{tmf}
Let $\lambda > 0$ and $y \in H_1$ be given. Then, the operator $J_{\lambda C^T \mathcal{M} C} y$ can be expressed as $y - \lambda C^T v$, where $v$ is the fixed point of the operator $\mathcal{N}:H_2\to H_2$ defined by
$$v\mapsto\mathcal{N}(v) := \mathcal{M}_{\frac{1}{\mu}} \Big(Cy + (\frac{1}{\mu} I - \lambda CC^T)v\Big),$$
% that is,
%\begin{equation}\label{fixp}
%u = \mathcal{M}_\mu (Cy + (\mu I - \lambda CC^T)u),
%\end{equation}
for any $\mu > 0$ and $\mathcal{M}_{\frac{1}{\mu}}$ denotes the Yosida approximation of $\mathcal{M}$ with index $\frac{1}{\mu}$.\\
 Furthermore, if ${\lambda}{\mu} \leq \frac{2}{\|C\|^2}$, then $\left\|I - {\lambda}{\mu} CC^T\right\| \leq 1$, thereby ensuring that $\mathcal{N}$ is {nonexpansive}.
\end{theorem}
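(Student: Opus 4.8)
The plan is to prove the two assertions separately: first the fixed-point \emph{characterization} of the resolvent, and then the nonexpansiveness estimate, which will drop out of Proposition \ref{lem1} combined with the Lipschitz bound on the Yosida approximation from Proposition \ref{Yosida}.

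For the characterization I would start from the defining inclusion $y \in x + \lambda C^T\mathcal{M}Cx$ and invoke the reformulation already recorded in (\ref{lmiso}): $x$ realizes $J_{\lambda C^T\mathcal{M}C}y$ precisely when $x = y - \lambda C^T v$ for some $v \in \mathcal{M}(Cx)$. Substituting $x = y - \lambda C^T v$ gives $Cx = Cy - \lambda CC^T v$, so the problem collapses to finding $v$ satisfying
\[
v \in \mathcal{M}\big(Cy - \lambda CC^T v\big).
\]
The task is then to recognize this inclusion as the fixed-point equation $v = \mathcal{N}(v)$.

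The crux is to introduce the Yosida approximation with the right index so that an auxiliary term cancels. Writing $\alpha = 1/\mu$ and using $\mathcal{M}_{1/\mu} = (\mathcal{M}^{-1} + \tfrac{1}{\mu} I)^{-1}$ from Proposition \ref{Yosida}, the equation $v = \mathcal{N}(v)$ unfolds as $v = (\mathcal{M}^{-1} + \tfrac{1}{\mu} I)^{-1}(w)$ with $w := Cy + (\tfrac{1}{\mu} I - \lambda CC^T)v$; equivalently $w \in \mathcal{M}^{-1}(v) + \tfrac{1}{\mu} v$, that is $v \in \mathcal{M}(w - \tfrac{1}{\mu} v)$. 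The point is that the deliberately placed $\tfrac{1}{\mu} I$ term inside $\mathcal{N}$ cancels exactly: $w - \tfrac{1}{\mu} v = Cy - \lambda CC^T v$, so $v = \mathcal{N}(v)$ is \emph{equivalent} to $v \in \mathcal{M}(Cy - \lambda CC^T v)$. Since every step is a genuine equivalence, the chain is reversible and we conclude $x = y - \lambda C^T v$ with $v$ a fixed point of $\mathcal{N}$, as claimed.

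For the nonexpansiveness I would write $\mathcal{N} = \mathcal{M}_{1/\mu} \circ A$, where $A(v) := Cy + (\tfrac{1}{\mu} I - \lambda CC^T)v$ is affine with linear part $\tfrac{1}{\mu} I - \lambda CC^T = \tfrac{1}{\mu}(I - \lambda\mu CC^T)$. By Proposition \ref{Yosida}, $\mathcal{M}_{1/\mu}$ is $\mu$-Lipschitz, while $A$ is $\tfrac{1}{\mu}\|I - \lambda\mu CC^T\|$-Lipschitz, so the composition is $\|I - \lambda\mu CC^T\|$-Lipschitz: the two factors of $\mu$ cancel. Applying Proposition \ref{lem1} with $\alpha = \lambda\mu$ yields $\|I - \lambda\mu CC^T\| \le 1$ whenever $\lambda\mu \le 2/\|C\|^2$, whence $\mathcal{N}$ is nonexpansive. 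I do not expect a serious obstacle here: the argument is an algebraic rearrangement once the Yosida substitution is fixed. The only points requiring care are keeping the set-valued equivalences reversible, so that fixed points of $\mathcal{N}$ correspond \emph{exactly} to solutions of the resolvent inclusion, and tracking the index $1/\mu$ so that the Lipschitz constants combine to precisely $\|I - \lambda\mu CC^T\|$ rather than a $\mu$-dependent quantity.
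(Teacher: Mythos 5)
Your proposal is correct and follows essentially the same route as the paper: both reduce via (\ref{lmiso}) to the inclusion $v \in \mathcal{M}(Cy - \lambda CC^T v)$, convert it through $\mathcal{M}^{-1}$ and the added $\tfrac{1}{\mu}I$ term into the fixed-point equation for $\mathcal{M}_{1/\mu}$, and obtain nonexpansiveness by composing the $\mu$-Lipschitz Yosida approximation with the $\tfrac{1}{\mu}\|I-\lambda\mu CC^T\|$-Lipschitz affine map, invoking Proposition \ref{lem1}. The only cosmetic difference is that you unfold $v=\mathcal{N}(v)$ back to the inclusion rather than building the chain forward; the equivalences are the same.
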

}}
\begin{proof} 
%From (\ref{lmiso}), we deduce  that $u \in \mathcal{M}(Cy-\lambda CC^Tu)$. 
%One has 
%\baqn
%&&u \in \mathcal{M}(Cy-\lambda CC^Tu) \Leftrightarrow Cy-\lambda CC^Tu \in \mathcal{M}^{-1}(u)\\
%&&\Leftrightarrow Cy+(\mu I-\lambda CC^T)u\in (\mathcal{M}^{-1}+\mu I)u  \Leftrightarrow u = (\mathcal{M}^{-1}+\mu I)^{-1}(Cy+(\mu I-\lambda CC^T)u)\\
%&& \Leftrightarrow u=\mathcal{M}_\mu (Cy+(\mu I-\lambda CC^T)u),
%\eaqn
{From (\ref{lmiso}), we deduce that $v \in \mathcal{M}(Cy - \lambda CC^T v)$. We have
\begin{align*}
v \in \mathcal{M}(Cy - \lambda CC^T v) 
&\Leftrightarrow Cy - \lambda CC^T v \in \mathcal{M}^{-1}(v) \\
&\Leftrightarrow Cy + (\frac{1}{\mu} I - \lambda CC^T)v \in (\mathcal{M}^{-1} + \frac{1}{\mu} I)v \\
&\Leftrightarrow v = (\mathcal{M}^{-1} + \frac{1}{\mu} I)^{-1}(Cy + (\frac{1}{\mu} I - \lambda CC^T)v) \\
&\Leftrightarrow v = \mathcal{M}_{\frac{1}{\mu}} (Cy + (\frac{1}{\mu} I - \lambda CC^T)v),
\end{align*}
where  $\mathcal{M}_{\frac{1}{\mu}}=(\mathcal{M}^{-1}+\frac{1}{\mu} I)^{-1}$ (see Proposition \ref{Yosida}).} \\
If ${\lambda}{\mu}\le 2/\Vert C\Vert^2$, then  $\Vert I-{\lambda}{\mu} CC^T \Vert \le 1$ (Lemma \ref{lem1}) and the mapping $L(u):=Cy+(\frac{1}{\mu} I-\lambda CC^T)u$ is $\frac{1}{\mu}$-Lipschitz continuous. Since $\mathcal{M}_{\frac{1}{\mu}}$ is ${\mu}$-Lipschitz continuous, $\mathcal{N}$ is {nonexpansive}. 
\end{proof}
Theorem \ref{tmf} can be used to design a numerical algorithm to compute a fixed point of the operator $\mathcal{N}$ and hence the resolvent  $J_{\lambda C^T\mathcal{M}C}$.
%\begin{theorem}\label{algokm}
%Let be given $\lambda>0$ and $y\in H_1$.  Choose $\mu$ such that $\frac{\lambda}{\mu}\in (0,2/\Vert C\Vert^2)$ and the sequence $(\alpha_k)\subset (0,1)$ satisfying $\sum_{k=1}^\infty \alpha_k(1-\alpha_k)=\infty$. We construct the sequence $(u_k)$ as follows
%$$
%\bold{Algorithm \;1:} \;\;\;\;\; u_0\in H, \;\;u_{k+1}=(1-\alpha_k)u_k+\alpha_k\mathcal{N}(u_k),\;\;\; k=0, 1,2\ldots.
%$$
%Then ($u_k$) converges weakly to a fixed point $u$ of $\mathcal{N}$ and  $J_{\lambda C^T\mathcal{M}C}y=y-\lambda C^T u$.\\
% In addition, if $\inf \alpha_k >0$ then the sequence $(x_k)$ defined by $x_k:=y-\lambda C^T u_k$ converges strongly to  $J_{\lambda C^T\mathcal{M}C}y$.
%\end{theorem}
{
\begin{theorem}\label{algokm}
Let $\lambda > 0$ and $y \in H_1$ be given. Choose $\mu>0$ such that ${\lambda}{\mu} \in (0, 2/\|C\|^2)$, and let $(\alpha_k) \subset (0,1)$ satisfy $\sum_{k=1}^\infty \alpha_k (1 - \alpha_k) = \infty$. We construct the sequence $(v_k)$ as follows:
\begin{equation*}
\textbf{Algorithm 1:} \quad v_0 \in H, \quad v_{k+1} = (1 - \alpha_k)v_k + \alpha_k \mathcal{N}(v_k), \quad k = 0, 1, 2, \ldots.
\end{equation*}
Then, the sequence $(v_k)$ converges weakly to a fixed point $v$ of $\mathcal{N}$, and $J_{\lambda C^T \mathcal{M} C} y = y - \lambda C^T v$.\\
Furthermore, if $\inf \alpha_k > 0$, then the sequence $(x_k)$ defined by $x_k := y - \lambda C^T v_k$ converges strongly to $J_{\lambda C^T \mathcal{M} C} y$.
\end{theorem}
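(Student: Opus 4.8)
The plan is to read Algorithm~1 as the Krasnoselskii--Mann iteration for the operator $\mathcal{N}$ of Theorem~\ref{tmf}, and then to convert the weak convergence it supplies into strong convergence of the primal iterates $x_k := y - \lambda C^T v_k$. Since $\lambda\mu \in (0, 2/\norm{C}^2)$, Theorem~\ref{tmf} already tells me that $\mathcal{N}$ is nonexpansive and that a point $v$ lies in $\mathrm{Fix}(\mathcal{N})$ if and only if $y-\lambda C^T v = J_{\lambda C^T\mathcal{M}C}y$; in particular $\mathrm{Fix}(\mathcal{N})$ is nonempty whenever the resolvent is defined. With $(\alpha_k)\subset(0,1)$ and $\sum_k \alpha_k(1-\alpha_k)=\infty$, I would invoke the classical Krasnoselskii--Mann theorem: $(v_k)$ is Fej\'er monotone relative to $\mathrm{Fix}(\mathcal{N})$, converges weakly to some $v\in\mathrm{Fix}(\mathcal{N})$, and is asymptotically regular, i.e. the residual $e_k := v_k - \mathcal{N}(v_k)$ tends to $0$ strongly. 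Feeding this weak limit into the fixed-point characterization of Theorem~\ref{tmf} yields $J_{\lambda C^T\mathcal{M}C}y = y - \lambda C^T v$ at once, which settles the first assertion.

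For the strong convergence of $(x_k)$ the difficulty is that $C^T$ is only weakly continuous, so weak convergence of $(v_k)$ by itself gives merely weak convergence of $x_k = y-\lambda C^T v_k$. To bridge this gap I would uncover the monotone inclusion hidden in the Yosida step. Unfolding $\mathcal{N}(v_k) = (\mathcal{M}^{-1}+\tfrac{1}{\mu}I)^{-1}\bpa{Cy + (\tfrac{1}{\mu}I - \lambda CC^T)v_k}$ exactly as in Theorem~\ref{tmf}, and using $Cy-\lambda CC^T v_k = C x_k$, one gets
\[
\mathcal{N}(v_k) \in \mathcal{M}\pa{C x_k + \tfrac{1}{\mu} e_k},
\]
whereas the limit obeys $v\in\mathcal{M}(Cx)$ with $x = y-\lambda C^T v$. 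Applying the monotonicity of $\mathcal{M}$ to these two pairs and substituting the identity $x_k - x = -\lambda C^T(v_k-v)$ collapses the cross term $\dotp{v_k-v}{C(x_k-x)}$ into $-\tfrac{1}{\lambda}\norm{x_k-x}^2$, leaving
\[
\tfrac{1}{\lambda}\norm{x_k-x}^2 \le \tfrac{1}{\mu}\dotp{v_k-v}{e_k} - \dotp{e_k}{C(x_k-x)} - \tfrac{1}{\mu}\norm{e_k}^2.
\]
Because $(v_k)$, and hence $(x_k)$ and $(C(x_k-x))$, are bounded while $e_k\to 0$ strongly, every term on the right tends to $0$, forcing $\norm{x_k-x}\to 0$.

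The crux of the argument is precisely this weak-to-strong upgrade, and its engine is the asymptotic regularity $e_k\to 0$: it is exactly what annihilates the three error terms in the displayed estimate, turning a mere monotonicity inequality into a quantitative contraction on $\norm{x_k-x}$. This is where the extra hypothesis $\inf\alpha_k>0$ is used, for combined with $v_{k+1}-v_k=\alpha_k\bpa{\mathcal{N}(v_k)-v_k}$ and the Fej\'er estimate it guarantees that the residual vanishes strongly. I expect the main obstacle to be organizing this last step cleanly, since the monotonicity inequality must be read off from the correct inclusion for $\mathcal{N}(v_k)$ and the cross term identified with $\norm{x_k-x}^2$; the boundedness and the $e_k\to0$ control are then routine.
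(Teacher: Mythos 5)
Your proof is correct, and the weak-convergence part coincides with the paper's (both invoke the Krasnoselskii--Mann theorem for the nonexpansive $\mathcal{N}$ of Theorem~\ref{tmf}). For the weak-to-strong upgrade, however, you take a genuinely different route. The paper stays entirely at the level of the dual iterates: from $\|v_{k+1}-v\|\le \|v_k-v\|+\alpha_k(\|\mathcal{N}(v_k)-\mathcal{N}(v)\|-\|v_k-v\|)$ and $\inf\alpha_k>0$ it extracts $\lim\|\mathcal{N}(v_k)-\mathcal{N}(v)\|=\lim\|v_k-v\|$, then feeds this into the explicit defect estimate $\|\mathcal{N}(v_k)-\mathcal{N}(v)\|^2\le\|v_k-v\|^2-\lambda^2\mu^2\big(\tfrac{2}{\lambda\mu\|C\|^2}-1\big)\|CC^T(v_k-v)\|^2$ to force $CC^T(v_k-v)\to0$ and hence $C^T(v_k-v)\to0$; this is where the strict inequality $\lambda\mu<2/\|C\|^2$ and the hypothesis $\inf\alpha_k>0$ are genuinely used. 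You instead exploit the monotonicity of $\mathcal{M}$ applied to the correctly unfolded inclusions $\mathcal{N}(v_k)\in\mathcal{M}(Cx_k+\tfrac1\mu e_k)$ and $v\in\mathcal{M}(Cx)$, together with the identity $x_k-x=-\lambda C^T(v_k-v)$, so that the residual $e_k=v_k-\mathcal{N}(v_k)\to0$ (asymptotic regularity) directly yields $\|x_k-x\|\to0$; I checked the inclusion, the cross-term computation and the final inequality, and they are all right. What your approach buys is a slightly stronger statement: since asymptotic regularity already follows from $\sum_k\alpha_k(1-\alpha_k)=\infty$ via the standard Fej\'er estimate and the monotone decay of $\|e_k\|$, your argument does not actually need $\inf\alpha_k>0$ (your closing remark misattributes the role of that hypothesis, but this is a presentational slip, not a gap), nor does it need strictness in $\lambda\mu\le 2/\|C\|^2$. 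What the paper's approach buys is self-containedness: it needs no inclusion-unfolding and only elementary norm manipulations on $I-\lambda\mu CC^T$. One cosmetic caveat: like the paper, you tacitly assume $\mathrm{Fix}(\mathcal{N})\neq\emptyset$ (equivalently, that $J_{\lambda C^T\mathcal{M}C}y$ exists), which is fine relative to the statement being proved.
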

}
\begin{proof}
Using Theorem  \ref{tmf} and Krasnoselskii-Mann algorithm (see, e.g., \cite{Bauschke}), the weak convergence of ($u_k$) to some  fixed point $u$ of $\mathcal{N}$ follows. It remains to prove the strong convergence of $(v_k)$. We have 
\beq\label{udecre}
\Vert v_{k+1}-v\Vert\le(1-\alpha_k) \Vert v_{k}-v\Vert+\alpha_k\Vert \mathcal{N}(v_k)-\mathcal{N}(v)\Vert\le \Vert v_{k}-v\Vert,
\eeq
 where we use the fact that  $v=\mathcal{N}(v)$. Thus the sequence $( \Vert v_{k}-v\Vert)$ is decreasing and converges. On the other hand,  (\ref{udecre}) can be rewritten as follows
 \baqn
 \Vert v_{k+1}-v\Vert 
 & \le&  \Vert v_{k}-v\Vert+\alpha_k(\Vert \mathcal{N}(v_k)-\mathcal{N}(v)\Vert- \Vert v_{k}-v\Vert)\\
 &\le&  \Vert v_{k}-v\Vert.
 \eaqn
{Let $k\to \infty$, we obtain 
\baq
\alpha_k(\Vert \mathcal{N}(v_k)-\mathcal{N}(v)\Vert- \Vert v_{k}-v\Vert)\to 0.
\eaq
Since $\inf \alpha_k =\alpha> 0$ and $\Vert \mathcal{N}(v_k)-\mathcal{N}(v)\Vert\le \Vert v_{k}-v\Vert$, we imply that
\baq
0=\lim_{k\to \infty}\alpha_k( \Vert v_{k}-v\Vert-\Vert \mathcal{N}(v_k)-\mathcal{N}(v)\Vert)\ge \lim_{k\to \infty}\alpha( \Vert v_{k}-v\Vert-\Vert \mathcal{N}(v_k)-\mathcal{N}(v)\Vert)\ge 0,
\eaq
which deduces that $ \lim_{k\to \infty}\Vert \mathcal{N}(v_k)-\mathcal{N}(v)\Vert= \lim_{k\to \infty} \Vert v_{k}-v\Vert.$
Note that
$$
\Vert CC^T(v_k-v)\Vert \le \Vert C \Vert \Vert C^T(v_k-v)\Vert. 
$$
Thus 
\baqn
 \Vert \mathcal{N}(v_k)-\mathcal{N}(v)\Vert^2&\le& {\mu^2}\Vert (\frac{1}{\mu} I-\lambda CC^T)(v_k-v)\Vert^2\\
  &\le& \Vert v_{k}-v\Vert^2- 2 {\lambda}{\mu} \Vert C^T(v_k-v)\Vert^2 + {\lambda^2}{\mu^2}\Vert CC^T(v_k-v)\Vert^2\\
   &\le& \Vert v_{k}-v\Vert^2-  2 \frac{\lambda\mu}{ \Vert C \Vert^2}\Vert CC^T(v_k-v)\Vert^2+ {\lambda^2}{\mu^2}\Vert CC^T(v_k-v)\Vert^2\\
 &\le& \Vert v_{k}-v\Vert^2-{\lambda^2}{\mu^2}(\frac{2}{\lambda\mu\Vert C\Vert^2}-1)\Vert CC^T(v_k-v)\Vert^2.
\eaqn
Let $k\to \infty$, since ${\lambda}{\mu} \in (0, 2/\|C\|^2)$ we obtain that $\Vert CC^T(v_k-v)\Vert\to 0$. \\
Note that $(v_k)$ is bounded, we infer that $\Vert C^T(v_k-v)\Vert^2=\langle CC^T(v_k-v), v_k-v \rangle\le   \Vert CC^T(v_k-v)\Vert \Vert  v_k-v \Vert \to 0$, {which means that $C^Tv_k\to C^Tv$ strongly}. Therefore $x_k= y - \lambda C^T v_k$ converges strongly to $J_{\lambda C^T\mathcal{M}C}y=y-\lambda C^T v$.}
\end{proof}
%\begin{remark}\normalfont
%{Micchelli-Chen-Xu's algorithm is established for $\lambda=1$ with  one parameter  $\mu$ in the convergence condition and the convergence is  weak under the condition $\| I - \mu CC^T \| \leq 1$ or more explicitly $\mu\in  (0, 2/\|C\|^2)$. Let us note that for general $\lambda>0$, this approach is also involved only  one parameter $\mu$. It is impossible for computers if $\|C\|$ is big. On the other hand, Theorem \ref{algokm} allows us to directly compute $J_{\lambda C^T\mathcal{M}C}$ with strong convergence under the condition ${\lambda}/{\mu} \in (0, 2/\|C\|^2)$.  Since both parameters $\mu$ and $\lambda$ are involved in the convergence condition, we are more flexible to choose them not very big nor very small, especially when $\Vert C\Vert$ is  big. Note that in most of  optimization algorithms, $\lambda>0$ is not fixed and can be chosen arbitrarily.}
%\end{remark}

Using Theorems \ref{tmf} and \ref{algokm},  by letting $v=\mu u$ and the fact that $\mathcal{M}_{\frac{1}{\mu}}=\mu  (I - J_{\frac{1}{\mu} \mathcal{M}})$, we obtain the following results.

\begin{corollary}\label{algo2n}
Let $\lambda > 0$ and $y \in H_1$ be given. Then, the operator $J_{\lambda C^T \mathcal{M} C} y=y - \lambda \mu C^T u$, where $u$ is the fixed point of the operator $\mathcal{Q}:H_2\to H_2$ defined by
$$u\mapsto\mathcal{Q}(u) :=  (I - J_{\frac{1}{\mu} \mathcal{M}}) \Big(Cy + ( I - \lambda \mu CC^T) u\Big), \mu>0.$$
 Furthermore, if ${\lambda}{\mu} \leq \frac{2}{\|C\|^2}$, then $\left\|I - {\lambda}{\mu} CC^T\right\| \leq 1$, thereby ensuring that $\mathcal{Q}$ is {nonexpansive}.
\end{corollary}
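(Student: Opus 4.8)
The plan is to obtain this corollary directly from Theorem \ref{tmf} via the change of variable $v = \mu u$, so that no new convergence analysis is needed beyond a clean algebraic substitution. First I would invoke Theorem \ref{tmf}, which gives $J_{\lambda C^T\mathcal{M}C}y = y - \lambda C^T v$, where $v$ solves the fixed-point equation $v = \mathcal{N}(v) = \mathcal{M}_{\frac{1}{\mu}}\big(Cy + (\frac{1}{\mu}I - \lambda CC^T)v\big)$. Writing $v = \mu u$, the resolvent formula immediately becomes $y - \lambda C^T(\mu u) = y - \lambda\mu C^T u$, which is the claimed identity, so it only remains to recast the fixed-point equation in terms of $u$.

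Next I would transform the fixed-point equation. Substituting $v = \mu u$ and distributing $\mu$ through the linear part gives $(\frac{1}{\mu}I - \lambda CC^T)(\mu u) = u - \lambda\mu CC^T u = (I - \lambda\mu CC^T)u$, so the argument of $\mathcal{M}_{\frac{1}{\mu}}$ collapses to $Cy + (I - \lambda\mu CC^T)u$. I would then use the Yosida identity recalled in Proposition \ref{Yosida}, namely $\mathcal{M}_{\frac{1}{\mu}} = \mu\,(I - J_{\frac{1}{\mu}\mathcal{M}})$, to rewrite $\mu u = \mathcal{M}_{\frac{1}{\mu}}(\cdots)$ as $\mu u = \mu\,(I - J_{\frac{1}{\mu}\mathcal{M}})\big(Cy + (I - \lambda\mu CC^T)u\big)$. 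Dividing by $\mu > 0$ yields exactly $u = \mathcal{Q}(u)$, so $u$ is a fixed point of $\mathcal{Q}$.

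For the nonexpansiveness claim under $\lambda\mu \le 2/\|C\|^2$, I would note that Proposition \ref{lem1} already guarantees $\|I - \lambda\mu CC^T\| \le 1$, and that $\mathcal{Q}$ is conjugate to $\mathcal{N}$ through the scaling $u \mapsto \mu u$; explicitly $\mathcal{Q}(u) = \frac{1}{\mu}\mathcal{N}(\mu u)$ by the computation above. Hence $\|\mathcal{Q}(u_1) - \mathcal{Q}(u_2)\| = \frac{1}{\mu}\|\mathcal{N}(\mu u_1) - \mathcal{N}(\mu u_2)\| \le \frac{1}{\mu}\cdot\mu\,\|u_1 - u_2\| = \|u_1 - u_2\|$, so nonexpansiveness of $\mathcal{Q}$ is inherited from that of $\mathcal{N}$ established in Theorem \ref{tmf}. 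Alternatively one could argue intrinsically: the affine map $u \mapsto Cy + (I - \lambda\mu CC^T)u$ is $1$-Lipschitz under the norm condition, and $I - J_{\frac{1}{\mu}\mathcal{M}}$ is firmly nonexpansive since $J_{\frac{1}{\mu}\mathcal{M}}$ is, so their composition $\mathcal{Q}$ is nonexpansive. Because this is a pure substitution there is no genuine obstacle; the only point requiring care is bookkeeping the factor $\mu$ consistently across the resolvent formula, the fixed-point equation, and the Yosida identity.
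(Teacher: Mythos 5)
Your proposal is correct and follows exactly the route the paper takes: the corollary is obtained from Theorem \ref{tmf} by the substitution $v=\mu u$ together with the Yosida identity $\mathcal{M}_{\frac{1}{\mu}}=\mu\,(I-J_{\frac{1}{\mu}\mathcal{M}})$, with nonexpansiveness of $\mathcal{Q}$ inherited from that of $\mathcal{N}$. Your write-up merely makes explicit the bookkeeping that the paper leaves as a one-line remark.
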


\begin{corollary}\label{alkm}
Let $\lambda > 0$ and $y \in H_1$ be given. Choose $\mu>0$ such that ${\lambda}{\mu} \in (0, 2/\|C\|^2)$, and let $(\alpha_k) \subset (0,1)$ satisfy $\sum_{k=1}^\infty \alpha_k (1 - \alpha_k) = \infty$. We construct the sequence $(u_k)$ as follows:
\begin{equation*}
\textbf{Algorithm 2:} \quad u_0 \in H_2, \quad u_{k+1} = (1 - \alpha_k)u_k + \alpha_k \mathcal{Q}(u_k), \quad k = 0, 1, 2, \ldots.
\end{equation*}
Then, the sequence $(u_k)$ converges weakly to a fixed point $u$ of $\mathcal{Q}$, and $J_{\lambda C^T \mathcal{M} C} y = y - \lambda \mu C^T u$.\\
Furthermore, if $\inf \alpha_k > 0$, then the sequence $(x_k)$ defined by $x_k := y - \lambda \mu C^T u_k$ converges strongly to $J_{\lambda C^T \mathcal{M} C} y$.
\end{corollary}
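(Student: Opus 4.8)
The plan is to reduce Corollary \ref{alkm} entirely to Theorem \ref{algokm} by means of the scaling substitution $v = \mu u$ that already links $\mathcal{N}$ and $\mathcal{Q}$ in the passage preceding Corollary \ref{algo2n}.

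First I would record the exact operator correspondence. Using $\mathcal{M}_{\frac{1}{\mu}} = \mu(I - J_{\frac{1}{\mu}\mathcal{M}})$ together with the two definitions, a direct substitution gives
\[
\mathcal{N}(\mu u) = \mathcal{M}_{\frac{1}{\mu}}\Big(Cy + (\tfrac{1}{\mu} I - \lambda CC^T)\mu u\Big) = \mu (I - J_{\frac{1}{\mu}\mathcal{M}})\Big(Cy + (I - \lambda\mu CC^T)u\Big) = \mu\,\mathcal{Q}(u),
\]
so that $\mathcal{Q}(u) = \frac{1}{\mu}\mathcal{N}(\mu u)$ for all $u$. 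In particular, $u$ is a fixed point of $\mathcal{Q}$ if and only if $v := \mu u$ is a fixed point of $\mathcal{N}$.

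Next I would transport the iteration. Setting $v_k := \mu u_k$ and multiplying the update of Algorithm 2 by $\mu$, the identity $\mathcal{N}(\mu u_k) = \mu\,\mathcal{Q}(u_k)$ turns $u_{k+1} = (1-\alpha_k)u_k + \alpha_k\mathcal{Q}(u_k)$ into $v_{k+1} = (1-\alpha_k)v_k + \alpha_k\mathcal{N}(v_k)$, which is exactly Algorithm 1 with starting point $v_0 = \mu u_0$. Since $\lambda\mu \in (0, 2/\|C\|^2)$ and $(\alpha_k)$ satisfies the same summability hypothesis, Theorem \ref{algokm} applies verbatim: $(v_k)$ converges weakly to a fixed point $v$ of $\mathcal{N}$, and when $\inf\alpha_k > 0$ the sequence $y - \lambda C^T v_k$ converges strongly to $J_{\lambda C^T\mathcal{M}C}y$.

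Finally I would translate the conclusions back. Because multiplication by the nonzero scalar $1/\mu$ is a weak homeomorphism of $H_2$, $u_k = v_k/\mu$ converges weakly to $u := v/\mu$, which by the equivalence above is a fixed point of $\mathcal{Q}$. Moreover $x_k = y - \lambda\mu C^T u_k = y - \lambda C^T v_k$ is literally the strongly convergent sequence from Theorem \ref{algokm}, so $x_k \to J_{\lambda C^T\mathcal{M}C}y = y - \lambda C^T v = y - \lambda\mu C^T u$, as claimed. There is no genuine obstacle here; the only points requiring a word of care are that the scaling preserves the weak limit and that the displayed iterate $x_k$ coincides in the two parametrizations, both of which are immediate.
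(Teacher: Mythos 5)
Your proposal is correct and follows exactly the route the paper indicates: the paper derives Corollary \ref{alkm} from Theorem \ref{algokm} precisely ``by letting $v=\mu u$ and the fact that $\mathcal{M}_{\frac{1}{\mu}}=\mu(I-J_{\frac{1}{\mu}\mathcal{M}})$,'' which is the scaling argument you carry out in detail. Your write-up simply makes explicit the verification $\mathcal{N}(\mu u)=\mu\,\mathcal{Q}(u)$ and the transport of the iteration and of the weak/strong limits, all of which are sound.
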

{
\begin{remark}\normalfont
The algorithm developed by Micchelli-Chen-Xu \cite{Micchelli} and later by Moudafi \cite{Moudafi} is formulated using only a single parameter $\mu$ within the convergence condition. The convergence is weak under the condition $\| I - \mu CC^T \| \leq 1$.  However, when $\|C\|$ is large, this can be computationally infeasible. In contrast, Theorem \ref{algokm} allows for the direct computation of $J_{\lambda C^T\mathcal{M}C}$ with strong convergence under the condition  $\| I - \lambda \mu CC^T \| \leq 1$. This condition, which involves both parameters $\mu$ and $\lambda$, offers greater flexibility in their selection, enabling them to be neither excessively large nor small, especially when $\|C\|$ is substantial. It is also noteworthy that if $\lambda=1$, Algorithm  2 becomes the algorithm used in \cite{Micchelli,Moudafi}, which is an important algorithm in computing the resolvent of the composite operators. Note that even $C$ is not big, our algorithms are also more stable as showed in the following example. 
\end{remark}
}
\begin{example} \label{ex1}
 Let us provide a simple example to show the advantage of our extension over the  Micchelli-Chen-Xu's approach. Let $\mathcal{M}=\partial \Vert \cdot \Vert_1$ and
$$
C=\begin{bmatrix}
1\;\;\; 3\;\;\; 7\;\; \;0\;\; \;8 \\
2 \;\;\;4 \;\;\;5 \;\;\;8\; \;\;7 \\
7\;\;\; 9\; \;\;6\; \;\;0 \;\;\;1 \\
2\;\;\; 0\;\; \;1\;\; \;4 \;\;\;7 \\
2 \;\;\;5\; \;\;8\; \;\;3\; \;\;8 \\
\end{bmatrix}.
$$
Then 
$$
CC^T=\begin{bmatrix}
123& 105 &84&65&137 \\
105  &158 &87  & 90 & 144 \\
 84  & 87 & 167  &27  & 115 \\
 65&  90  & 27&  70&   80 \\
137&144& 115&   80 & 166
\end{bmatrix}.
$$
and $\Vert CC^T\Vert=532.64.$
It is known that (see, e.g., \cite{Micchelli})
$$
J_{{\frac{1}{\mu}} \mathcal{M}}(x)=\Big(\max\big(\vert x_1 \vert-\frac{1}{\mu}, 0\big){ \rm sign}(x_1), \ldots, \max\big(\vert x_5 \vert-\frac{1}{\mu}, 0\big){ \rm sign}(x_5)\Big),
$$
where $x=(x_1,\ldots,x_5).$
We want to calculate $J_{\lambda C^T \mathcal{M} C} y$ where $y=[2; 4; -5; 3; 9]$. First we consider 
 $\lambda=1$, our Algorithm 2 becomes Micchelli-Chen-Xu's algorithm. To find the fixed point $u$ of $\mathcal{Q}$, we use $\alpha_k=0.3$ in  Algorithm 2 and stop after $500$ iterations or $\Vert u_{k+1}-u_k\Vert \le 10^{-3}$. Although Micchelli-Chen-Xu's algorithm converges, its output are different with different $\mu$ while they must have the same value if the convergence condition   $ \left\|I - {}{\mu} CC^T\right\| \le 1$ is satisfied.
 \begin{center}
    \begin{tabular} { | c | c | c | c |  c | }
    \hline
     $\mu$ & $10^{-2}$ &$10^{-3}$ \\
    \hline
  $ \left\|I - {}{\mu} CC^T\right\| $ & $4.33$ &$1$\\
  \hline
   % \multirow {3} {4em} 
    $J_{ C^T \mathcal{M} C} y$ & $( 8.73, 15.31, 9.13, 9.45, 22.85)$&$(-0.85, 3.66, -5.01, -1.26, 3.23)$ \\
    \hline
    \end{tabular}
    \end{center}
     \begin{center}
    \begin{tabular} { | c | c | c | c |  c | }
    \hline
     $\mu$ & $10^{-4}$ &$10^{-5}$ \\
    \hline
  $ \left\|I - {}{\mu} CC^T\right\| $ & $1$ &$1$\\
  \hline
   % \multirow {3} {4em} 
    $J_{ C^T \mathcal{M} C} y$ & $( 0.25, 3.69, -5.76, -1.53, 3.34)$&$(0.99, 2.61, -7.20, 0.82, 5.45)$ \\
    \hline
    \end{tabular}
    \end{center}
    Next we use our Algorithm 2 with $\lambda=0.01$. Our algorithm converges and the outputs are the same with different values of $\mu$, even the convergence condition   $ \left\|I - {\lambda}{\mu} CC^T\right\| \le 1$ is not satisfied.
     \begin{center}
    \begin{tabular} { | c | c | c | c |  c | }
    \hline
     $\mu$ & $1$ &$10^{-1}$ \\
    \hline
  $ \left\|I - {}{\lambda\mu} CC^T\right\| $ & $4.33$ &$1$\\
  \hline
   % \multirow {3} {4em} 
    $J_{ \lambda C^T \mathcal{M} C} y$ & $( 1.86, 3.79, -5.27, 2.85, 8.69)$&$( 1.86, 3.79, -5.27, 2.85, 8.69)$ \\
    \hline
    \end{tabular}
    \end{center}
     \begin{center}
    \begin{tabular} { | c | c | c | c |  c | }
    \hline
     $\mu$ & $10^{-2}$ &$10^{-3}$ \\
    \hline
  $ \left\|I - {\lambda}{\mu} CC^T\right\| $ & $1$ &$1$\\
  \hline
   % \multirow {3} {4em} 
    $J_{ \lambda C^T \mathcal{M} C} y$ & $( 1.86, 3.79, -5.27, 2.85, 8.69)$&$( 1.86, 3.79, -5.27, 2.85, 8.69)$ \\
    \hline
    \end{tabular}
    \end{center}
    It means that our Algorithm 2 is not only more general but also more stable than Micchelli-Chen-Xu's algorithm. \qed
\end{example}

Note that $CC^T$ is symmetric and positive semidefinite. If $CC^T$ is positive definite, we even obtain the linear rate convergence of our algorithm.
\begin{theorem}\label{linear}
  If  $\Vert I-{\lambda}{\mu} CC^T \Vert < 1$, then Algorithm 1  in Theorem \ref{algokm} converges with linear rate. Particularly, if  {$E:=CC^T\succ 0$}, we can choose $\lambda>0, \mu >0$ such that the convergence rate of Algorithm 1   is linear.
\end{theorem}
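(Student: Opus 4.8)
The plan is to show that the \emph{strict} inequality $\Vert I - \lambda\mu CC^T\Vert < 1$ upgrades the nonexpansiveness of $\mathcal{N}$ established in Theorem \ref{tmf} to a genuine contraction, after which linear convergence of Algorithm 1 is essentially automatic via the Banach-type estimate already appearing in the proof of Theorem \ref{algokm}.

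First I would revisit the Lipschitz bound for $\mathcal{N}$. Writing $\mathcal{N} = \mathcal{M}_{\frac{1}{\mu}}\circ L$ with $L(u) = Cy + (\frac{1}{\mu}I - \lambda CC^T)u$, the affine map $L$ has Lipschitz constant $\Vert \frac{1}{\mu}I - \lambda CC^T\Vert = \frac{1}{\mu}\Vert I - \lambda\mu CC^T\Vert$, while $\mathcal{M}_{\frac{1}{\mu}}$ is $\mu$-Lipschitz by Proposition \ref{Yosida}. Composing these, $\mathcal{N}$ is $q$-Lipschitz with $q := \Vert I - \lambda\mu CC^T\Vert$. Under the hypothesis $q<1$ the map $\mathcal{N}$ is a strict contraction, hence has a unique fixed point $v$, which coincides with the one produced by Theorem \ref{algokm}.

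The next step is the contraction estimate along the iteration. Using $v = \mathcal{N}(v)$, we have $v_{k+1} - v = (1-\alpha_k)(v_k - v) + \alpha_k(\mathcal{N}(v_k) - \mathcal{N}(v))$, so $\Vert v_{k+1} - v\Vert \le (1 - \alpha_k(1-q))\Vert v_k - v\Vert$. Assuming in addition $\alpha := \inf_k \alpha_k > 0$ (the setting of the strong-convergence part of Theorem \ref{algokm}), this yields $\Vert v_k - v\Vert \le \rho^k \Vert v_0 - v\Vert$ with $\rho := 1 - \alpha(1-q) \in (0,1)$, i.e. a linear rate; transferring through $x_k = y - \lambda C^T v_k$, which is $\lambda\Vert C\Vert$-Lipschitz in $v_k$, gives the same linear rate for the resolvent iterates.

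For the second claim, when $E := CC^T \succ 0$ I would exploit that $E$ is self-adjoint with spectrum contained in $[c, \Vert C\Vert^2]$ for some $c>0$ (positive definiteness is exactly what bounds the spectrum away from $0$). Setting $t := \lambda\mu$, the spectral calculus gives $\Vert I - tE\Vert = \max_{s\in\sigma(E)}\vert 1 - ts\vert \le \max(\vert 1 - tc\vert, \vert 1 - t\Vert C\Vert^2\vert)$, which is strictly below $1$ for every $t \in (0, 2/\Vert C\Vert^2)$; hence any factorization $\lambda\mu = t$ with such $t$ (e.g. $\lambda = 1$, $\mu = t$) makes $\mathcal{N}$ a contraction and Part 1 applies. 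I expect the only real subtlety to be this final point: the argument collapses if $E$ is merely positive semidefinite, since then $0 \in \sigma(E)$ forces $\Vert I - tE\Vert = 1$ and only nonexpansiveness survives, so the strict definiteness hypothesis is genuinely needed. If one prefers to avoid the spectral theorem, the bound $\Vert I - tE\Vert < 1$ can instead be extracted from the operator inequality $E^2 \preceq \Vert C\Vert^2 E$ combined with $\langle Ex, x\rangle \ge c\Vert x\Vert^2$.
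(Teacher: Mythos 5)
Your proposal is correct and its first half follows the paper's route exactly: the strict inequality $\Vert I-\lambda\mu CC^T\Vert<1$ makes $\mathcal{N}=\mathcal{M}_{1/\mu}\circ L$ a $q$-contraction with $q=\Vert I-\lambda\mu CC^T\Vert$, and the relaxed iteration then contracts geometrically. You are in fact more careful than the paper here: the paper simply asserts that a contraction implies linear convergence of Algorithm 1, whereas your estimate $\Vert v_{k+1}-v\Vert\le(1-\alpha_k(1-q))\Vert v_k-v\Vert$ makes explicit that one needs $\inf_k\alpha_k>0$ (otherwise the product $\prod_j(1-\alpha_j(1-q))$ need not decay geometrically); this hypothesis is stated in Theorem \ref{algokm} only for the strong-convergence clause, so flagging it is a genuine, if small, improvement. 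For the second claim the two arguments diverge: the paper avoids spectral theory, fixes the single value $\gamma=\lambda\mu=c/\Vert E\Vert^2$, and bounds $\Vert(I-\gamma E)x\Vert^2\le(1-2\gamma c+\gamma^2\Vert E\Vert^2)\Vert x\Vert^2$ using the crude estimate $\Vert Ex\Vert\le\Vert E\Vert\,\Vert x\Vert$, which is why only one specific parameter choice is exhibited. Your spectral-calculus argument (or equally your suggested refinement via $\Vert Ex\Vert^2\le\Vert C\Vert^2\langle Ex,x\rangle$) buys the stronger conclusion that \emph{every} $t=\lambda\mu\in(0,2/\Vert C\Vert^2)$ yields $\Vert I-tE\Vert<1$ when $E\succ0$, and your remark that $0\in\sigma(E)$ forces $\Vert I-tE\Vert=1$ in the merely semidefinite case correctly identifies why the definiteness hypothesis cannot be dropped. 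Both proofs are valid; yours gives sharper information about admissible parameters at the cost of invoking the spectral theorem for self-adjoint operators (which does hold in the Hilbert-space setting of the paper, since $\Vert I-tE\Vert=\sup_{s\in\sigma(E)}\vert 1-ts\vert$ and $\sigma(E)\subseteq[c,\Vert C\Vert^2]$).
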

\begin{proof}
If $\Vert I-{\lambda}{\mu} CC^T \Vert < 1$, then $\mathcal{N}$ is a contraction and thus Algorithm 1  in Theorem \ref{algokm} converges with linear rate. 

If $CC^T$ is positive definite there exists $c>0$ such that 
$$
\langle Ex,x \rangle \ge c \Vert x \Vert^2, \;\;\forall\;\;x\in H_1.
$$
{Let  $\gamma:={\lambda}{\mu}$.  We choose $\mu>0$ such that  $\gamma=\frac{c}{\Vert E\Vert^2} \Leftrightarrow \mu=\frac{c}{\lambda \Vert E\Vert^2}$}. Then
\baqn
\Vert (I-\gamma E)x\Vert^2&=&x^2-2\gamma\langle Ex, x \rangle+\gamma^2\Vert Ex\Vert^2.\\
&\le & (1-2\gamma c+\gamma^2\Vert E\Vert^2)\Vert x\Vert^2\\
&\le & (1-\frac{c^2}{ \Vert E\Vert ^2})\Vert x \Vert^2,
\eaqn
where $\Vert E\Vert$ denotes the induced norm of the linear bounded operator $E$.
It means that  $\Vert I-{\lambda}{\mu} CC^T \Vert < 1$, and the conclusion follows. 
\end{proof}
\begin{remark}\normalfont
The condition {$CC^T\succ 0$} holds, for example when $C\in\R^{m\times n}$ is a matrix with full row rank. 
\end{remark}
Next we provide an application of our development to compute equilibria of set-valued Lur'e dynamical systems.
\begin{example}  \label{ex2}
Let us consider a class of set-valued Lur'e dynamical systems of the  following form  
\begin{subequations}
\label{eq:tot}
\begin{empheq}[left={({\mathcal L})}\empheqlbrace]{align}
  & \dot{x}(t) = -f(x(t))+B\lambda(t),\; {\rm a.e.} \; t \in [0,+\infty); \label{1a}\\
  & y(t)=Cx(t),\\
  &  \lambda(t)   \in -\mathcal{M}(y(t)), \;t\ge 0;\\
  & x(0) = x_0.
\end{empheq}
\end{subequations}
where $x: [0,\infty)\to H_1$ is the state variable and $f: H_1\to H_1$ is Lipschitz continuous. The operators $B:H_2\to H_1, C: H_1\to H_2$ are linear bounded and there exists a positive definite linear bounded operator $P$ such that $PB=C^T$ while the set-valued mapping $\mathcal{M}: H_2 \rightrightarrows H_2$ is maximal monotone. Set-valued Lur'e dynamical systems have been a  fundamental model in control theory, engineering and applied mathematics (see, e.g., \cite{ahl2,br0,BT,L1} and  references therein).    Note that  $({\mathcal L})$ can be rewritten as follows
\beq
\dot{x} \in -\mathcal{H}(x), \;\;\;x(t_0) = x_0,
\eeq
where $\mathcal{H}(x)=f(x)+BFCx$. An equilibrium point $x^*$ of $({\mathcal L})$ satisfies 
\beq\label{lure}
0 \in f(x^*)+B\mathcal{M}(Cx^*)\Leftrightarrow 0\in Pf(x^*) + C^T\mathcal{M}(Cx^*).
\eeq
In order to solve (\ref{lure}), it requires to compute the resolvent of the composite operator $C^T\mathcal{M}C$.\qed 
\end{example}

\subsection{Resolvent of $\mathcal{M}_1+C^T\mathcal{M}_2C$}
Next we want to compute the resolvent of $\mathcal{M}_1+C^T\mathcal{M}_2C$ where $\mathcal{M}_1: H_1 \rightrightarrows H_1, \mathcal{M}_2: H_2 \rightrightarrows H_2$ are maximal monotone and $C: H_1\to H_2$ is a linear bounded mapping. For given $\lambda>0$, $y\in H_1$ we want to find $x:=J_{\lambda (\mathcal{M}_1 + C^T \mathcal{M}_2 C)} y\in H_1$ such that 
\beq\label{sum}
y\in x+\lambda\mathcal{M}_1x+\lambda C^T\mathcal{M}_2Cx.
\eeq 

%\begin{theorem}\label{tmf2}
%Let be given $\lambda>0$ and $y\in H_1$. Then $J_{\lambda (\mathcal{M}_1+ C^T\mathcal{M}_2C)}y=J_{\lambda\mathcal{M}_1}(y-\lambda C^T u)$, where $u$ is the fixed point of the  operator $\mathcal{P}(u):=(\mathcal{M}_2)_\mu (CJ_{\lambda\mathcal{M}_1}(y-\lambda C^Tu)+ \mu u))$, i.e., 
%\beq\label{fixp2}
% u=(\mathcal{M}_2)_\mu (CJ_{\lambda\mathcal{M}_1}(y-\lambda C^Tu)+ \mu u)).
%\eeq
%for any $\mu>0$. In addition, if $\frac{\lambda}{\mu}\le 2/\Vert C\Vert^2$ then $\Vert I-\frac{\lambda}{\mu} CC^T \Vert \le 1$ and thus $\mathcal{P}$ is nonexpansive.
%\end{theorem}
{
\begin{theorem}\label{tmf2}
Let $\lambda > 0$ and $y \in H_1$ be given. Then we have
$$J_{\lambda (\mathcal{M}_1 + C^T \mathcal{M}_2 C)} y=J_{\lambda \mathcal{M}_1}(y - \lambda C^T u),$$
where $u$ is the fixed point of the operator $\mathcal{P}: H_2 \to H_2$ defined by
$$u\mapsto \mathcal{P}(u) := (\mathcal{M}_2)_\kappa \Big(C J_{\lambda \mathcal{M}_1}(y - \lambda C^T u) + \kappa u\Big),$$
 that is for any $\kappa > 0$,
\begin{equation}\label{fixp2}
u = (\mathcal{M}_2)_\kappa \Big(C J_{\lambda \mathcal{M}_1}(y - \lambda C^T u) + \kappa u\Big).
\end{equation}
Furthermore, if $\frac{\lambda}{\kappa} \leq \frac{2}{\|C\|^2}$, then $\|I - \frac{\lambda}{\kappa} CC^T \| \leq 1$, and thus $\mathcal{P}$ is {nonexpansive}.
\end{theorem}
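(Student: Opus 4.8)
The plan is to mirror the derivation of Theorem \ref{tmf}, splitting the argument into a fixed-point characterization and a nonexpansiveness estimate; the only new feature is that a second resolvent $J_{\lambda\mathcal{M}_1}$ now sits inside the iteration map.

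First I would unfold the defining inclusion (\ref{sum}). Writing $x = J_{\lambda(\mathcal{M}_1 + C^T \mathcal{M}_2 C)}y$, the inclusion $y \in x + \lambda\mathcal{M}_1 x + \lambda C^T\mathcal{M}_2 Cx$ means there is some $u \in \mathcal{M}_2(Cx)$ with $y - \lambda C^T u \in (I + \lambda\mathcal{M}_1)x$, equivalently $x = J_{\lambda\mathcal{M}_1}(y - \lambda C^T u)$. Substituting this expression for $x$ back into $u \in \mathcal{M}_2(Cx)$ gives $u \in \mathcal{M}_2\big(CJ_{\lambda\mathcal{M}_1}(y - \lambda C^T u)\big)$. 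From here I would run exactly the Yosida manipulation of Theorem \ref{tmf}: using $u \in \mathcal{M}_2(w) \Leftrightarrow w \in \mathcal{M}_2^{-1}(u) \Leftrightarrow w + \kappa u \in (\mathcal{M}_2^{-1} + \kappa I)(u)$ together with the identity $(\mathcal{M}_2)_\kappa = (\mathcal{M}_2^{-1} + \kappa I)^{-1}$ from Proposition \ref{Yosida}, I obtain $u = (\mathcal{M}_2)_\kappa\big(CJ_{\lambda\mathcal{M}_1}(y - \lambda C^T u) + \kappa u\big) = \mathcal{P}(u)$. Since every step is an equivalence, this both produces the fixed point and shows conversely that any fixed point of $\mathcal{P}$ recovers the resolvent through $x = J_{\lambda\mathcal{M}_1}(y - \lambda C^T u)$.

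Second, for nonexpansiveness I would factor $\mathcal{P} = (\mathcal{M}_2)_\kappa \circ L$ with $L(u) := CJ_{\lambda\mathcal{M}_1}(y - \lambda C^T u) + \kappa u$. By Proposition \ref{Yosida}, $(\mathcal{M}_2)_\kappa$ is $\frac{1}{\kappa}$-Lipschitz, so it suffices to prove $L$ is $\kappa$-Lipschitz. Setting $d := u_1 - u_2$ and $\delta := J_{\lambda\mathcal{M}_1}(y - \lambda C^T u_1) - J_{\lambda\mathcal{M}_1}(y - \lambda C^T u_2)$, firm nonexpansiveness of $J_{\lambda\mathcal{M}_1}$ applied to the arguments $y - \lambda C^T u_i$ gives $\langle \delta,\, -\lambda C^T d\rangle \geq \|\delta\|^2$, i.e. $\langle C\delta,\, d\rangle \leq -\frac{1}{\lambda}\|\delta\|^2$. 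Expanding $\|L(u_1) - L(u_2)\|^2 = \|C\delta + \kappa d\|^2 = \|C\delta\|^2 + 2\kappa\langle C\delta,\, d\rangle + \kappa^2\|d\|^2$ and inserting this inequality together with $\|C\delta\|^2 \leq \|C\|^2\|\delta\|^2$ yields $\|L(u_1) - L(u_2)\|^2 \leq \big(\|C\|^2 - \tfrac{2\kappa}{\lambda}\big)\|\delta\|^2 + \kappa^2\|d\|^2$. Under $\frac{\lambda}{\kappa}\|C\|^2 \leq 2$, i.e. $\frac{\lambda}{\kappa} \leq \frac{2}{\|C\|^2}$, the bracketed coefficient is nonpositive, so $\|L(u_1) - L(u_2)\| \leq \kappa\|d\|$ and hence $\|\mathcal{P}(u_1) - \mathcal{P}(u_2)\| \leq \|u_1 - u_2\|$. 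The accompanying bound $\|I - \frac{\lambda}{\kappa}CC^T\| \leq 1$ under the same threshold is precisely Proposition \ref{lem1} with $\alpha = \frac{\lambda}{\kappa}$.

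The hard part, relative to Theorem \ref{tmf}, will be the nonlinearity $J_{\lambda\mathcal{M}_1}$ sitting inside $L$: here $L$ is no longer affine, so its Lipschitz constant cannot simply be read off from the operator norm $\|\frac{1}{\kappa}I - \lambda CC^T\|$ as in the pure composite case $C^T\mathcal{M}C$. The resolution is exactly the firm nonexpansiveness of $J_{\lambda\mathcal{M}_1}$, which supplies the sign-definite cross term $\langle C\delta,\, d\rangle \leq -\frac{1}{\lambda}\|\delta\|^2$; this negative contribution is what absorbs the otherwise uncontrolled $\|C\delta\|^2$ and restores the familiar threshold $\frac{\lambda}{\kappa} \leq \frac{2}{\|C\|^2}$.
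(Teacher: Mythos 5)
Your proposal is correct and follows essentially the same route as the paper: the same unfolding of the inclusion into the system $x=J_{\lambda\mathcal{M}_1}(y-\lambda C^Tu)$, $u\in\mathcal{M}_2(Cx)$, the same Yosida-approximation equivalence to obtain the fixed-point equation, and the same nonexpansiveness estimate in which firm nonexpansiveness of $J_{\lambda\mathcal{M}_1}$ supplies the cross term $\langle C\delta, d\rangle\le -\frac{1}{\lambda}\|\delta\|^2$ that absorbs $\|C\delta\|^2$ under the threshold $\frac{\lambda}{\kappa}\le\frac{2}{\|C\|^2}$ (your map $L$ is exactly the paper's $P_2$). No gaps.
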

}
\begin{proof}
Note that (\ref{sum}) can be rewritten as follows  
\begin{equation}
\left\{
\begin{array}{l}
y\in  x+\lambda\mathcal{M}_1x+\lambda C^Tu\\ \\
u\in \mathcal{M}_2Cx,
\end{array}\right.
\end{equation}
which is equivalent to
\begin{equation}
\left\{
\begin{array}{l}
 x=J_{\lambda\mathcal{M}_1}(y-\lambda C^Tu)\\ \\
u\in \mathcal{M}_2(CJ_{\lambda\mathcal{M}_1}(y-\lambda C^Tu)). 
\end{array}\right.
\end{equation}
Similarly as in the proof of Theorem \ref{tmf}, we have 
\baqn
u \in \mathcal{M}_2(CJ_{\lambda\mathcal{M}_1}(y-\lambda C^Tu)) &\Leftrightarrow& CJ_{\lambda\mathcal{M}_1}(y-\lambda C^Tu) \in   \mathcal{M}_2^{-1}(u)\\
&\Leftrightarrow& CJ_{\lambda\mathcal{M}_1}(y-\lambda C^Tu)+ \kappa u\in (\mathcal{M}_2^{-1}+\kappa I)u \\
& \Leftrightarrow& u=(\mathcal{M}_2)_\kappa \Big(CJ_{\lambda\mathcal{M}_1}(y-\lambda C^Tu)+ \kappa u\Big).
\eaqn
Let $P_1(u):=CJ_{\lambda\mathcal{M}_1}(y-\lambda C^Tu)$.  We prove that   $P_2(u):=P_1(u)+\kappa u$ is $\kappa$-Lipschitz continuous if $\frac{\lambda}{\kappa}\le 2/\Vert C\Vert^2$  . Indeed, since $J_{\lambda\mathcal{M}_1}$ is firmly-nonexpansive 
we have 
{
\begin{small}
\baqn
 \langle P_1(u_1)-P_1(u_2), u_1-u_2 \rangle 
&=& \langle CJ_{\lambda\mathcal{M}_1}(y-\lambda C^Tu_1)-CJ_{\lambda\mathcal{M}_1}(y-\lambda C^Tu_2), u_1-u_2\rangle\\
&=&-\frac{1}{\lambda} \langle J_{\lambda\mathcal{M}_1}(y-\lambda C^Tu_1)-J_{\lambda\mathcal{M}_1}(y-\lambda C^Tu_2), (y-\lambda C^Tu_1)-(y-\lambda C^Tu_2)\rangle\\
&\le& -\frac{1}{\lambda} \Vert J_{\lambda\mathcal{M}_1}(y-\lambda C^Tu_1)-J_{\lambda\mathcal{M}_1}(y-\lambda C^Tu_2)\Vert^2.
\eaqn
\end{small}
}
Thus if $\frac{\lambda}{\kappa}\le 2/\Vert C\Vert^2$, one has 
{
\baq\nonumber
\Vert P_2(u_1)-P_2(u_2)\Vert^2&=&\kappa^2\Vert u_1-u_2 \Vert^2+2\kappa \langle P_1(u_1)-P_1(u_2), u_1-u_2 \rangle+\Vert P_1(u_1)-P_1(u_2)\Vert^2\\\nonumber
&\le&\kappa^2\Vert u_1-u_2 \Vert^2-(\frac{2\kappa}{\lambda}-\Vert C\Vert^2) \Vert J_{\lambda\mathcal{M}_1}(y-\lambda C^Tu_1)-J_{\lambda\mathcal{M}_1}(y-\lambda C^Tu_2)\Vert^2\\
&\le&\kappa^2\Vert u_1-u_2 \Vert^2.
\label{estsum}
\eaq
}
Consequently, $P_2$ is $\kappa$-Lipschitz continuous and hence $\mathcal{P}$ is {nonexpansive}. 
\end{proof}
%\begin{theorem}\label{algokm2}
%Let be given $\lambda>0$ and $y\in H_1$.  Choose $\mu>0$ such that $\frac{\lambda}{\mu}\in (0,2/\Vert C\Vert^2)$ and the sequence $(\alpha_k)\subset (0,1)$ satisfying $\sum_{k=1}^\infty \alpha_k(1-\alpha_k)=\infty$. We construct the sequence $(u_k)$ as follows
%$$
%\bold{Algorithm \;2:} \;\;\;\;\; u_0\in H, \;\;u_{k+1}=(1-\alpha_k)u_k+\alpha_k\mathcal{P}(u_k),\;\;\; k=0, 1,2\ldots
%$$
%where $\mathcal{P}(u):=(\mathcal{M}_2)_\mu (CJ_{\lambda\mathcal{M}_1}(y-\lambda C^Tu)+ \mu u))$.
%Then ($u_k$) converges weakly to a fixed point $u$ of $\mathcal{P}$ and  $J_{\lambda (\mathcal{M}_1+ C^T\mathcal{M}_2C)}(y)=J_{\lambda\mathcal{M}_1}(y-\lambda C^T u)$.
%\end{theorem}
{
\begin{theorem}\label{algokm2}
Let $\lambda > 0$ and $y \in H_1$ be given. Choose $\kappa > 0$ such that $\frac{\lambda}{\kappa} \in (0, 2/\|C\|^2)$, and let the sequence $(\alpha_k) \subset (0,1)$ such that $\sum_{k=1}^\infty \alpha_k(1 - \alpha_k) = \infty$. We construct the sequence $(u_k)$ as follows:
\begin{equation*}
\textbf{Algorithm 3:} \quad u_0 \in H, \quad u_{k+1} = (1 - \alpha_k)u_k + \alpha_k\mathcal{P}(u_k), \quad k = 0, 1, 2, \ldots
\end{equation*}
where $\mathcal{P}(u) := (\mathcal{M}_2)_\kappa \Big(C J_{\lambda \mathcal{M}_1}(y - \lambda C^T u) + \kappa u\Big)$.
Then, the sequence $(u_k)$ converges weakly to a fixed point $u$ of $\mathcal{P}$. In addition, if  $\inf \alpha_k > 0$, then $J_{\lambda \mathcal{M}_1}(y - \lambda C^T u_k)\to J_{\lambda (\mathcal{M}_1 + C^T \mathcal{M}_2 C)} (y)$ strongly.
\end{theorem}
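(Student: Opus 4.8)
The plan is to follow the blueprint of Theorem \ref{algokm} almost verbatim, replacing the operator $\mathcal{N}$ by $\mathcal{P}$ and using the quantitative contraction estimate already produced inside the proof of Theorem \ref{tmf2}. To organize the argument I would write $g(u) := J_{\lambda \mathcal{M}_1}(y - \lambda C^T u)$, so that $\mathcal{P}(u) = (\mathcal{M}_2)_\kappa\big(C g(u) + \kappa u\big)$ and the object whose convergence is ultimately claimed is exactly $g(u_k)$. Since $\frac{\lambda}{\kappa} \in (0, 2/\Vert C\Vert^2)$, Theorem \ref{tmf2} guarantees that $\mathcal{P}$ is nonexpansive and possesses a fixed point; hence the Krasnoselskii--Mann theorem (see, e.g., \cite{Bauschke}) applied to Algorithm 3 gives at once that $(u_k)$ converges weakly to some fixed point $u$ of $\mathcal{P}$. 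This settles the first assertion, and it remains only to upgrade weak convergence of $(u_k)$ to strong convergence of $g(u_k)$.

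The key ingredient is a refinement of nonexpansiveness. Since $(\mathcal{M}_2)_\kappa$ is $\tfrac{1}{\kappa}$-Lipschitz (Proposition \ref{Yosida}), coupling this bound with the estimate (\ref{estsum}) applied to $P_2(u) = C g(u) + \kappa u$ yields, for all $u_1, u_2 \in H_2$,
\[
\Vert \mathcal{P}(u_1) - \mathcal{P}(u_2)\Vert^2 \le \Vert u_1 - u_2\Vert^2 - \frac{1}{\kappa^2}\Big(\frac{2\kappa}{\lambda} - \Vert C\Vert^2\Big)\Vert g(u_1) - g(u_2)\Vert^2 .
\]
Because $\frac{\lambda}{\kappa} < 2/\Vert C\Vert^2$ we have $\frac{2\kappa}{\lambda} - \Vert C\Vert^2 > 0$, so the displayed inequality records precisely how much $\mathcal{P}$ strictly contracts in the direction measured by $g$.

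With this estimate in hand I would run the same telescoping argument as in Theorem \ref{algokm}, now taking $u$ to be the weak limit found above (a fixed point, so $\mathcal{P}(u) = u$). First, $\Vert u_{k+1} - u\Vert \le \Vert u_k - u\Vert$, so $(\Vert u_k - u\Vert)$ is decreasing and convergent. Rewriting the iteration exactly as in the proof of Theorem \ref{algokm} and letting $k \to \infty$ forces $\alpha_k(\Vert u_k - u\Vert - \Vert \mathcal{P}(u_k) - \mathcal{P}(u)\Vert) \to 0$; since $\inf \alpha_k = \alpha > 0$ and $\Vert \mathcal{P}(u_k) - \mathcal{P}(u)\Vert \le \Vert u_k - u\Vert$, this gives $\lim_k \Vert \mathcal{P}(u_k) - \mathcal{P}(u)\Vert = \lim_k \Vert u_k - u\Vert$. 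Feeding these two equal limits into the refined estimate makes its right-hand residual tend to zero, and the strict positivity of the coefficient then forces $\Vert g(u_k) - g(u)\Vert \to 0$.

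Finally, $g(u_k) = J_{\lambda \mathcal{M}_1}(y - \lambda C^T u_k)$ and, by Theorem \ref{tmf2}, $g(u) = J_{\lambda \mathcal{M}_1}(y - \lambda C^T u) = J_{\lambda(\mathcal{M}_1 + C^T \mathcal{M}_2 C)}(y)$, so the strong convergence just obtained is exactly the second assertion. The only place the hypotheses are genuinely used is the derivation of the refined estimate, so I expect the main (and rather modest) obstacle to be verifying that combining the $\tfrac{1}{\kappa}$-Lipschitz bound for $(\mathcal{M}_2)_\kappa$ with (\ref{estsum}) indeed produces the $\Vert g(u_1) - g(u_2)\Vert^2$ residual with a strictly positive coefficient. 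It is worth emphasizing that this route is in fact cleaner than the proof of Theorem \ref{algokm}: there one first extracts $\Vert CC^T(v_k - v)\Vert \to 0$ and must bootstrap to $\Vert C^T(v_k - v)\Vert \to 0$, whereas here the quantity extracted, $g(u_k)$, is already the object whose strong convergence is being claimed.
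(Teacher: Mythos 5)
Your proposal is correct and follows essentially the same route as the paper: Krasnoselskii--Mann for the weak convergence, then the telescoping argument from Theorem \ref{algokm} combined with the $\tfrac{1}{\kappa}$-Lipschitz bound on $(\mathcal{M}_2)_\kappa$ and the estimate (\ref{estsum}) to extract $\Vert J_{\lambda\mathcal{M}_1}(y-\lambda C^T u_k)-J_{\lambda\mathcal{M}_1}(y-\lambda C^T u)\Vert \to 0$. Your refined inequality is exactly the paper's (\ref{estm}), so no further comment is needed.
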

}
\begin{proof}
{The weak convergence of $(u_k)$ is easily obtained. For the remain, we do similarly as in the proof of Theorem \ref{algokm}. The sequence $( \Vert u_{k}-u\Vert)$ is decreasing, converges and 
$$ \lim_{k\to \infty}\Vert \mathcal{P}(u_k)-\mathcal{P}(u)\Vert= \lim_{k\to \infty} \Vert u_{k}-u\Vert.$$
Similarly as in (\ref{estsum}), we have 
\baq\nonumber
\Vert \mathcal{P}(u_k)-\mathcal{P}(u)\Vert^2&=&\Vert u_k-u \Vert^2+\frac{2}{\kappa} \langle P_1(u_k)-P_1(u), u_k-u \rangle+\frac{1}{\kappa^2}\Vert P_1(u_k)-P_1(u)\Vert^2\\\nonumber
&\le&\Vert u_k-u \Vert^2-\frac{1}{\kappa^2}(\frac{2\kappa}{\lambda}-\Vert C\Vert^2) \Vert J_{\lambda\mathcal{M}_1}(y-\lambda C^Tu_k)-J_{\lambda\mathcal{M}_1}(y-\lambda C^Tu)\Vert^2\\
&\le&\Vert u_k-u \Vert^2.
\label{estm}
\eaq
Let $k\to\infty$, we must have $ \Vert J_{\lambda\mathcal{M}_1}(y-\lambda C^Tu_k)-J_{\lambda\mathcal{M}_1}(y-\lambda C^Tu)\Vert\to 0$ and the conclusion follows. }
\end{proof}

\section{Conclusions}\label{sec5}
{This paper  introduced a new fixed-point approach for computing the resolvent of composite operators, advancing beyond the classical framework of Micchelli-Chen-Xu \cite{Micchelli,Micchelli1,Moudafi}. The proposed methodology offers several significant theoretical and practical contributions to the field of monotone operator theory and optimization, building upon fundamental work in resolvent operator theory \cite{Bauschke,Robinson}. The primary theoretical contribution lies in the development of a two-parameter fixed-point formulation that generalizes existing single-parameter approaches. We have established that for any $\lambda > 0$ and $y \in H_1$, the resolvent $J_{\lambda C^T\mathcal{M}C}y$ can be expressed as $y - \lambda\mu C^Tu$, where $u$ is the fixed point of a carefully constructed operator $\mathcal{Q}$. This formulation provides enhanced flexibility through the incorporation of both $\lambda$ and $\mu$ parameters, enabling effective computation even when dealing with operators having large norms, a limitation noted in previous works \cite{chen,Moudafi}.}

{Our convergence analysis demonstrates that the proposed algorithms exhibit weak, strong, and linear convergence under verifiable conditions, extending classical results from monotone operator theory \cite{IT,Tseng}. Specifically, when $\|I - \lambda\mu CC^T\| \leq 1$, we prove weak convergence of the iterative sequence, while additional mild conditions on the relaxation parameters ensure strong convergence. Furthermore, we establish that when $CC^T$ is positive definite, appropriate parameter selection yields linear convergence, significantly improving the practical efficiency of the method.}

{The theoretical framework has been extended to address the computation of resolvents for operators of the form $\mathcal{M}_1 + C^T\mathcal{M}_2C$, encompassing a broader class of problems in optimization and control theory \cite{ab,Attouch0}. This extension maintains the convergence properties of the base algorithm while accommodating more complex operator structures encountered in applications such as set-valued Lur'e systems \cite{ahl2,br0,BT}.}

{Several theoretical questions remain open for future investigation. The characterization of optimal parameter selection strategies, particularly the relationship between convergence rates and parameter choices, warrants further study following approaches similar to those in \cite{Attouch1,Chen}. The possibility of weakening the positive definiteness assumption on $CC^T$ in infinite-dimensional settings presents another avenue for theoretical development. Additionally, the connection between our approach and other splitting methods \cite{Fukushima,Tseng} may yield insights into unified frameworks for handling composite operators. From an applications perspective, the development of adaptive parameter selection schemes and the extension to more general classes of structured operators, particularly those arising in hierarchical optimization and multi-leader-follower games \cite{Pang}, represent promising directions for future research. The potential application of our methodology to set-valued Lur'e dynamical systems {\cite{BT,L1} }and traffic equilibrium problems also merits further investigation.}

{In conclusion, this work provides a significant advancement in the computation of resolvent operators, offering both theoretical insights and practical algorithms with provable convergence properties. The framework developed here lays the groundwork for future research in both theoretical and applied aspects of monotone operator theory and optimization.}

\end{document}